\newtheorem{thm}{Theorem}[section]
\newtheorem{cor}[thm]{Corollary}
\newtheorem{lem}[thm]{Lemma}
\newtheorem{rem}[thm]{Remark}
\newtheorem{definition}[thm]{Definition}
\numberwithin{equation}{section}
\numberwithin{thm}{section}
\title{Reservoir computing with the Kuramoto model}
\author{
    Hayato Chiba\thanks{Advanced Institute for Materials Research, Tohoku University, Japan.} \and
    Koichi Taniguchi\thanks{ Department of Mathematical and Systems Engineering, Faculty of Engineering, Shizuoka University, Japan.}  \and
    Takuma Sumi\thanks{Hotchkiss Brain Institute, Cumming School of Medicine, University of Calgary, Canada}
}
\date{Last update: 20 Oct 2025}
\begin{document}
\maketitle

\begin{abstract}

Reservoir computing aims to achieve high-performance and low-cost machine learning with a dynamical system as a reservoir. 
However, in general, there are almost no theoretical guidelines for its high-performance or optimality. 
Therefore, this paper aims to propose the new concept {\it the edge of bifurcation}
for designing a high-performance reservoir, and provide a mathematical justification for it.
This concept is a generalization of the famous criterion {\it the edge of chaos}.
For this purpose, 
this paper focuses on the reservoir computing with the Kuramoto model and theoretically reveals its approximation ability.
The main result provides an explicit expression of the dynamics  
of the Kuramoto reservoir by using the order parameters. 
Thus, the output of the reservoir computing is expressed as a linear combination of the order parameters.
As a corollary, sufficient conditions on hyperparameters are obtained so that the set of the order parameters gives the complete basis of the Lebesgue space.
This implies that the Kuramoto reservoir has a universal approximation property.
Furthermore, the edge of bifurcation 
is also discussed from the viewpoint of its approximation ability. 
It is numerically demonstrated by prediction tasks.

\end{abstract}

\section{Introduction}

Reservoir computing is one of the methods of recurrent machine learning with dynamical systems for computations on time series data 
(see \cite{jaeger2001,natschlager2002}) 
and has various applications such as time series prediction, 
system identification, signal generation and signal transformation (see \cite{Luk_2009,Yan2024}). 
The architecture of reservoir computing consists of two components, namely, reservoir and readout. 
The reservoir is a fixed recurrent nonlinear map (dynamical system) which appropriately maps input data to a high dimensional space. 
The readout is a time-independent linear map (matrix), and only the readout is learned to minimize the error between its output and the desired result. 
Since the learning of reservoir computing is done only by simple algorithms such as linear regression, 
reservoir computing has a distinctive advantage to make it possible to learn at low cost and to optimize instantly. 
On the other hand, its ability to achieve high performance is strongly dependent on the choice of a reservoir. 

The approximation ability of various reservoir computing models have been theoretically studied (see e.g. \cite{Goton2023,GGO2023,GO2018,Hart2024}). 
These results demonstrate that the reservoir computing can possess sufficient approximation ability
under appropriate choices of reservoir dynamics and parameters. 
This highlights the importance of finding practical or theoretical criteria for designing a high-performance reservoir.
One of the well-known criteria is
so-called the edge of chaos (or edge of stability) \cite{packard1988,langton1990,crutchfield1990}. 

The edge of chaos states that reservoir computing has its greatest computational capacity near the phase transition point
from an ordered state to a chaotic state. 
The effectiveness of the edge of chaos has been experimentally confirmed in various settings
(see e.g. \cite{carroll2020,tanaka2019,Yam_2016} for the details of the edge of chaos). 
In \cite{toyoizumi2011}, the edge of chaos is 
theoretically studied for recurrent neural networks.
However, to the best of our knowledge, rigorous theoretical analyses and mathematical guarantees for the edge of chaos are still very limited.

Recently, 
it has been reported that the performance of reservoir computing can be improved without the edge of chaos. 
Specifically, Goto, Nakajima and Notsu
\cite{goto2021twin} studied reservoir computing with the Navier-Stokes equation as a reservoir and showed that 
the optimal computational performance is achieved near the critical Reynolds number for  the Hopf bifurcation. 
In light of this recent study,
we propose \textit{the edge of bifurcation}, which is a generalization of the edge of chaos, and it means that the performance gets better 
when the dynamical system undergoes some bifurcations (e.g. Hopf bifurcation).

In this paper, we tackle the theoretical analysis of the edge of bifurcation and aim to provide mathematical insights in this topic.
For this purpose, we focus on the Kuramoto model as a reservoir,
which is the most typical mathematical model for synchronization phenomena. 
Recently, reservoir computing with the Kuramoto model has been studied numerically (see \cite{Wang2022,zuo2023self}), 
but its potential has not yet been fully clarified, and there is a lack of theoretical results on its properties.
The purpose of this paper is to theoretically clarify the approximation ability of  reservoir computing with the Kuramoto model 
and to provide mathematical insights on the edge of bifurcation for reservoir computing. 

The basic results of the Kuramoto model, which are the fundament of this paper, are summarized in Section \ref{sec:2}.
A summary of our results and contributions is as follows: 
\begin{itemize}
\item {\bf (Formulation of reservoir computing and numerical method)} 
The first contribution is to formulate the Kuramoto reservoir computing with the aid of a certain integral operator for mathematical analysis. 
More precisely, we propose reservoir computing by using the infinite-dimensional Kuramoto model as a reservoir,
which is a continuous limit ($N\to \infty$) of the Kuramoto model as the number $N$ of oscillators tends to infinity, 
and a readout is not a matrix but an integral operator (see Section \ref{sec:3}). 
Furthermore, our reservoir computing is easily implementable and its specific numerical calculation procedure is also given via the Fourier series 
(see Section \ref{sec:5}).

\item {\bf (Approximation ability of the Kuramoto reservoir)}
The second contribution is to theoretically clarify the approximation ability of reservoir computing with the Kuramoto model. 
We can obtain a useful representation of the Kuramoto reservoir computing via the Fourier series.  
More precisely, we assume that the integral kernel of the integral operator for the readout  belongs to $L^2(0,2\pi)$.
Then the output function can be represented by a linear combination of the $n$-th order parameters 
of a solution of the infinite-dimensional Kuramoto model (see \eqref{key_representation}). 
Therefore, if the family of $n$-th order parameters constructs a complete basis of $L^2_{\mathrm{per}}$ (the set of periodic $L^2$ functions), 
the Kuramoto reservoir has an enough approximation ability. 
Our main theorem provides an explicit expression of the $n$-th order parameters  (Theorem \ref{thm:main}) .
As a corollary, a sufficient condition for this family to be a complete basis of $L^2_{\mathrm{per}}$ (Corollary \ref{cor:main}) will be obtained. 
The precise statements and proofs are given in Section \ref{sec:4}.
In addition, we also provide the approximation theorems and error estimates for prediction tasks 
(Sections 5 and 6).
These results are numerically supported in Section 7. 

\item ({\bf Edge of bifurcation)} 
The third contribution is to provide some mathematical insights on the edge of bifurcation conjecture 
for reservoir computing with the Kuramoto model from a viewpoint of the approximation ability. 
A key parameter is the coupling strength $K$ of the Kuramoto model, which determines  
the occurrence of synchronization. 
There is a critical value $K_c$ such that a bifurcation occurs at $K=K_c$ under certain conditions (see \cite{Kur_1975,Kur_1984,chiba2015}); 
the de-synchronous state is stable when $K<K_c$ and the synchronous state appears when $K>K_c$.  
Hence, it is expected that this $K_c$ has a strong influence on the performance of the Kuramoto reservoir computing in terms of the edge of bifurcation. 
In this paper, we consider the edge of bifurcation in some settings based on our approximation theorem and the bifurcation theory of the Kuramoto model.
As a consequence, we can see that
the type of bifurcation and the shape of bifurcation diagram are important in the performance of Kuramoto reservoir computing. 
For example, it turns out that a bifurcation should be not a pitchfork bifurcation (steady to steady), but be a Hopf bifurcation (steady to periodic). 
Further, we will show that a sufficient condition for the family of the $n$-th order parameters 
to be the basis of  $L^2_{\mathrm{per}}$ is satisfied when $K$ is slightly larger than $K_c$, that guarantees the edge of bifurcation. 
See Section \ref{sec:4} for the details of these observations.
This result is also numerically supported in Section \ref{sec:5}.

\end{itemize}


\section{Kuramoto model}\label{sec:2}

The (finite-dimensional) Kuramoto model is the system of ordinary differential equations given by
\begin{equation}\label{KM}
    \frac{d \theta_i}{dt} = \omega_i + \frac{K}{N} \sum_{j=1}^N \sin (\theta_j - \theta_i),\quad i=1,\cdots ,N,
\end{equation}
where $i \in [N] := \{1,2,\ldots, N\}$, $\theta_i = \theta_i(t) \in [0,2\pi)$ denotes the phase of an $i$-th oscillator on a circle, 
$\omega_i \in \mathbb R$ is its natural frequency distributed according to some probability density function $g(\omega)$, 
$K>0$ is a coupling strength, and $N$ is the number of oscillators.
We normally assume that $g(\omega)$ is an even and unimodal function about its mean frequency $\Omega \in \mathbb R$: 
(even) $g(\Omega + \omega)=g(\Omega-\omega)$ for any $\omega\in \mathbb R$;
(unimodal) $g(\omega_1)>g(\omega_2)$ for $\Omega\le \omega_1 <\omega_2$ and $g(\omega_1)<g(\omega_2)$ for $\omega_1 <\omega_2\le \Omega$.
This system was proposed by Kuramoto \cite{Kur_1975} in order to investigate collective synchronization phenomena 
and was derived by means of the averaging method from coupled dynamical systems having limit cycles.
The order parameter is usually used to measure the degree of synchronization of the oscillators $\{\theta_i\}_{i=1}^N$ and is defined by 
\begin{equation}\label{Order}
r (t) := \frac{1}{N} \sum_{j=1}^N e^{i\theta_j (t)},
\end{equation}
where $i = \sqrt{-1}$. The order parameter gives the centroid of oscillators. 
If $|r|>0$, then the synchronization occurs, while if $|r|$ is nearly equal to zero, 
the oscillators are uniformly distributed (de-synchronization), see Fig. \ref{fig:1}. 

\begin{rem}
For the study of the Kuramoto model, we can assume without loss of generality that $\Omega = 0$.
Indeed, changing variables for \eqref{KM} as $\theta_i \mapsto \theta_i + \Omega t$ yields
\begin{equation}\label{KM2}
    \frac{d \theta_i}{dt} = \omega_i-\Omega + \frac{K}{N} \sum_{j=1}^N \sin (\theta_j - \theta_i),\quad i=1,\cdots ,N.
\end{equation}
Then, the mean value of $\{ \omega_i-\Omega \}_i$ is zero.
Nevertheless, we will see that for an application to a reservoir computing, nonzero $\Omega$ plays an important role (Corollary \ref{cor:main}).
\end{rem}

\begin{figure}[t]
\begin{center}
\includegraphics[width=100mm]{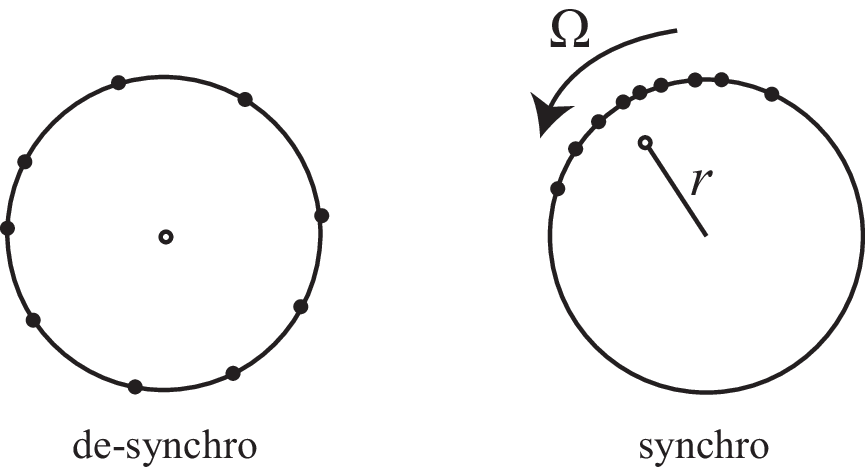}
\caption{Schematic representation of oscillators on a circle. 
The left represents a de-synchronous state with $|r|\approx 0$.  The right represents a synchronous state with $|r|\approx 1$.}
\label{fig:1}
\end{center}
\end{figure}

The coupling strength $K$ is the key bifurcation parameter that determines whether the dynamics becomes synchronous or de-synchronous, 
and it is important to obtain the bifurcation diagram of $r(t)$ for the study of the edge of bifurcation.
In this paper, we 
address the case where $N$ is large and consider the continuous limit of \eqref{KM} as $N\to\infty$, 
namely, the infinite-dimensional Kuramoto model given by 
\begin{equation}\label{CKM}
\begin{cases}
	\dfrac{\partial\rho}{\partial t} + \dfrac{\partial}{\partial \theta} (\rho v) =0,\\
	v =  \omega + K |r| \sin (\psi -\theta),\\
	\displaystyle r(t) = |r(t)| e^{i\psi(t)} := \int_{\mathbb R} \int_0^{2\pi} e^{i\theta}\rho(\theta, \omega, t) g(\omega)\, d\theta d\omega,
\end{cases}
\end{equation}
where $\rho = \rho(\theta, \omega,t)$ is the density of oscillators at a phase $\theta$ parameterized by natural frequency $\omega$ and time $t$, 
$v=v(\theta, \omega,t)$ is the drift velocity of the oscillators, and 
$r(t)$ is the complex order parameter of a solution $\rho$ with its argument $\psi$. 
For any $K>0$,
the trivial steady state solution of \eqref{CKM} exists 
(i.e. a completely incoherent state given by $\rho(\theta,\omega)=1/(2\pi)$ 
for any $\theta, \omega$) and it corresponds to the de-synchronous state $r \equiv 0$.
Kuramoto \cite{Kur_1984} conjectured that a bifurcation diagram of $r(t)$ is given as Fig. \ref{fig:2} (a):
\\

\noindent\textbf{Kuramoto conjecture}

Suppose that $N\to \infty$ and natural frequencies 
$\omega_i$'s are distributed according to an even and unimodal function $g(\omega )$ about its mean value $\Omega$ with $g''(\Omega) < 0$
(this is true for Fig. \ref{fig:2} (a)).
If $K<K_c:= 2/(\pi g(\Omega ))$, then $|r(t)| \equiv 0$ is asymptotically stable, while if $K>K_c$, 
the synchronous state emerges and $|r(t)|\equiv r(K)$ is asymptotically stable for some $r(K)>0$ given by
\begin{equation}\label{kuramoto}
r(K) = \sqrt{\frac{-16}{\pi K_c^4 g''(0)}}\sqrt{K-K_c} + O(K-K_c)
\end{equation}
(see \cite{strogatz2000} for Kuramoto's discussion).\\
  
The critical value $K_c$ is called the Kuramoto transition point.
Then, Chiba \cite{chiba2015} 
mathematically formulated this problem and proved the Kuramoto conjecture. 
See \cite{chiba2015} for the precise statement.


It is known that the type of bifurcation strongly depends on the assumption
for $g(\omega)$, especially for the sign of $g''(\Omega)$.
For example, when $g(\omega)$ is a uniform distribution (i.e. $g''(\Omega) = 0$),
the bifurcation diagram is given as Fig. \ref{fig:2} (b).
In this paper, as a reservoir, we consider both cases (a) $g$ is Gaussian (b) $g$ is a uniform distribution.
The important difference is that for a uniform distribution case, the order parameter
suddenly jumps to a certain value at the bifurcation point $K=K_c$, while it continuously changes 
for Gaussian case.

\begin{figure}[t]
\begin{center}
\includegraphics[width=120mm]{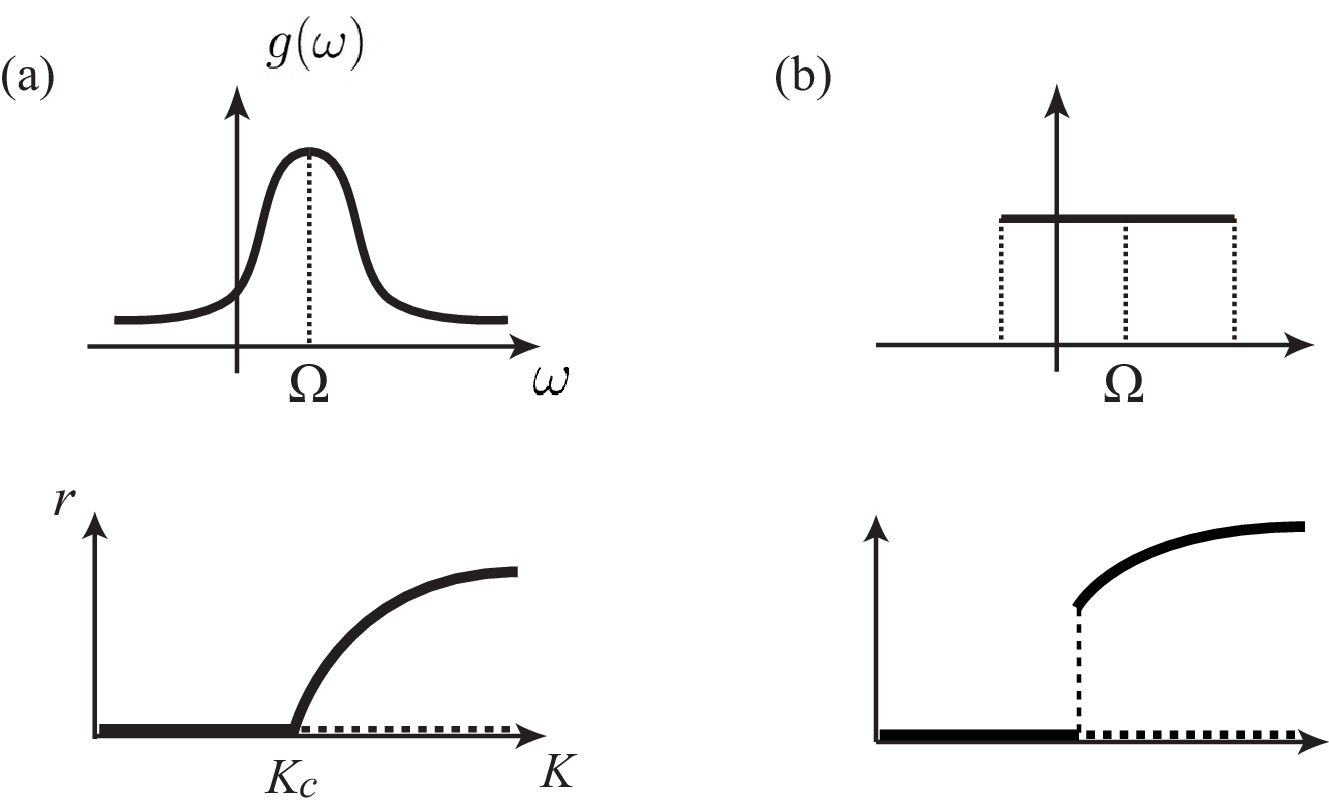}
\caption{
Bifurcation diagrams of the order parameter (lower  panel) for the cases where (a) $g(\omega)$ is a unimodal 
Gaussian (b) $g(\omega)$ is a uniform distribution with the average $\Omega$.
The bifurcation diagrams are independent of $\Omega$ as was mentioned in the previous Remark.
Note that for (b), the order parameter takes a large value just after the bifurcation.} \label{fig:2} 
\end{center}
\end{figure}

\section{Reservoir computing with the Kuramoto model}\label{sec:3}

We introduce the finite-dimensional Kuramoto model with an input term
\begin{equation}\label{KM_input}
    \frac{d \theta_i}{dt} = \alpha u(t) + \omega_i + \frac{K}{N} \sum_{j=1}^N \sin (\theta_j - \theta_i),
\end{equation}
where $i \in [N]$, $\theta_i = \theta_i(\alpha u; t) \in [0,2\pi)$ denotes the phase of an $i$-th oscillator on a circle, 
$\omega_i \in \mathbb R$ denotes its natural frequency, $K>0$ is a coupling strength, 
$N$ is the number of oscillators, $\alpha>0$ is a constant and $u=u(t)$ is an input. 
The case $\alpha=0$ is the (usual) Kuramoto model \eqref{KM}. 
In reservoir computing, we take a sufficiently small $\alpha$.
Indeed, in Goto, Nakajima and Notsu \cite{goto2021twin} using 
the Navier-Stokes equation as a reservoir, they assume that the variance of 
the input is sufficiently small so that the reservoir dynamics is close to the autonomous system.

The order parameter $r(t)$ is similarly  defined by \eqref{Order}. In addition, we define 
the $n$-th order parameter of $\{\theta_j\}_{j=1}^N$ by 
\[
r_n (\alpha u; t) := \frac{1}{N} \sum_{j=1}^N e^{in\theta_j (\alpha u; t)}, \quad n \in \mathbb Z.
\]
Similarly to Section \ref{sec:2}, we consider the continuous limit of \eqref{KM_input} as $N\to\infty$:
with given natural frequency $\omega$ at time $t$, $\rho$ satisfies the continuity equation
\begin{equation}\label{CKM_input}
\begin{cases}
	\dfrac{\partial\rho}{\partial t} + \dfrac{\partial}{\partial \theta} (\rho v) =0,\\
	v =  \alpha u + \omega + K |r| \sin (\psi -\theta),\\
	\displaystyle r(\alpha u; t) = |r(\alpha u; t)| e^{i\psi(\alpha u; t)} 
	:= \int_{\mathbb R} \int_0^{2\pi} e^{i\theta}\rho(\alpha u;\theta, \omega, t) g(\omega)\, d\theta d\omega,
\end{cases}
\end{equation}
where $\rho = \rho(\alpha u; \theta, \omega,t)$ is the density 
  of oscillators at a phase $\theta$ parameterized by a natural frequency $\omega$ at time $t$, 
$v=v(\alpha u;\theta, \omega,t)$ is the drift velocity of the oscillators, and 
$r(\alpha u; t)$ is the order parameter of $\rho$. The $n$-th order parameter is defined as
\begin{equation}\label{n-Order}
r_n(\alpha u; t) := \int_{\mathbb R} \int_0^{2\pi} e^{in\theta}\rho(\alpha u;\theta, \omega, t) g(\omega)\, d\theta d\omega,\quad n \in \mathbb Z.
\end{equation}
We propose reservoir computing with the infinite-dimensional Kuramoto model \eqref{CKM_input} as follows:\\

\noindent{\bf Reservoir computing and its purpose.} 

The reservoir computing with \eqref{CKM_input} consists of mappings $u(t) \mapsto \rho(\alpha u;\theta, \omega, t) \mapsto y(t)$,
where $y(t)\in \mathbb{R}$ is an output function.
The map $u \mapsto \rho$ is defined  by a solution of (\ref{CKM_input}).
Our purpose is to find the mapping (readout) $ \rho(\alpha u;\theta, \omega, t) \mapsto y(t)$ so that 
\[
\|y_{\mathrm{tar}} - y \| \approx 0,
\]
for a given target function $y_{\text{tar}}(t) \in \mathbb R$, where $\|\cdot\|$ is a certain norm or metric. 
For a finite dimensional problem, usually a readout is given by a matrix. However, since \eqref{CKM_input} is an infinite dimensional 
dynamical system, a readout may be a linear operator on an infinite dimensional space.
Thus, we consider the following linear integral operator 
\begin{equation}\label{KRC}
\rho \mapsto y(t) = \mathcal W_{\mathrm{out}} [\rho] (\alpha u, t) := 
\int_{\mathbb R} \int_0^{2\pi} W_{\mathrm{out}}(\theta) \rho(\alpha u;\theta, \omega, t) g(\omega)\, d\theta d\omega,
\end{equation}
where $\mathcal W_{\mathrm{out}}$ is a linear integral operator with an integral kernel $W_{\mathrm{out}} (\theta)$. 
Hence, the purpose of our reservoir computing is to find a suitable function $W_{\mathrm{out}}(\theta )$ satisfying 
$\|y_{\mathrm{tar}} - y \| \approx 0$.
In Theorem 5.2 in \cite{Hart2024}, they consider the similar setting with a view point of randomness.
\\

For mathematical analysis, we always assume that $W_{\mathrm{out}} \in L^2(0,2\pi)$ and it is real-valued in this paper. 
It is important to note that our reservoir computing \eqref{KRC} is convenient because it is expressed by the $n$-th order parameters as follows. 
The integral kernel $W_{\mathrm{out}} \in L^2(0,2\pi)$ has a Fourier expansion
\[
W_{\mathrm{out}}(\theta) = \sum_{n =-\infty}^\infty w_n e^{in\theta}.
\]
Then, by the definition \eqref{n-Order} of $r_n(\alpha u; t)$, we rewrite $\mathcal W_{\mathrm{out}} [\rho] $ as 
\[
\mathcal W_{\mathrm{out}} [\rho] (\alpha u, t) = \sum_{n=-\infty}^\infty w_n
\int_{\mathbb R} \int_0^{2\pi} e^{in\theta} \rho(\alpha u;\theta, \omega, t) g(\omega)\, d\theta d\omega
= \sum_{n=-\infty}^\infty w_n r_n(\alpha u;t). 
\]
Hence, the output of our reservoir computing can be represented as 
\begin{equation}\label{key_representation}
y(t) = \sum_{n=-\infty}^\infty w_n r_n(\alpha u;t). 
\end{equation}
This is an easily implementable setting and we can compute $W_{\text{out}}(\theta)$ and $y(t)$ numerically by using these formulae.
Before giving its numerical algorithm, we consider mathematical results in the next sections.

\section{Approximation ability}\label{sec:4}
As mentioned in the previous section,
the approximation ability of reservoir computing \eqref{KRC} is essentially determined by the property of $\{r_n(\alpha u;t)\}_n$. 
In this section, we focus on the analysis of $\{r_n(\alpha u;t)\}_n$ and derive the condition when
the estimate $\|y_{\mathrm{tar}} - y \| \approx 0$ holds.
Throughout this section, we assume that $g(\omega )$ is an even and unimodal about its mean value $\Omega $.

\begin{thm}\label{thm:main}
Let $K>0$, $\alpha \in \mathbb R$ and $\Omega$ mean value of $g(\omega)$. 
Suppose that $u(t)$ is bounded and continuous on $\mathbb R$.
\begin{itemize}
\item[\rm (i)] For a steady state solution $\rho_*$ of the system \eqref{CKM_input} with $\Omega = \alpha = 0$, 
its order parameters $r_n =: c_n$ are real numbers for any $n\in \mathbb{Z}$.

\item[\rm (ii)] For the system \eqref{CKM_input}, there exists a solution $\rho$ constructed from $\rho_*$ such that 
the $n$-th order parameters of $\rho$ is expressed as
\begin{equation}\label{r_n:basis}
r_n (\alpha u;t) = c_n e^{in (\Omega t + \alpha \int_0^t u(s)\, ds)},\quad t>0, \,\, c_n\in \mathbb{R}.
\end{equation}
\end{itemize}
\end{thm}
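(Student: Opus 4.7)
The plan is to exploit the symmetries of \eqref{CKM_input}: rotational invariance in $\theta$, together with the reflection symmetry $(\theta,\omega)\mapsto(-\theta,-\omega)$ that is available because $g$ is even about $\Omega$ and because the input/frequency shifts vanish when $\Omega=\alpha=0$.

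For part (i), I would first invoke rotational invariance to normalize $\rho_*$ so that its first order parameter is real and nonnegative (i.e.\ $\psi_*=0$). Setting $\tilde\rho(\theta,\omega):=\rho_*(-\theta,-\omega)$, a direct computation using evenness of $g$ and $\sin(-x)=-\sin x$ shows that $\tilde\rho$ is again a stationary solution of \eqref{CKM_input} with $\Omega=\alpha=0$, and that its first order parameter equals $\overline{r_*}=r_*$. The uniqueness of the stationary state with a prescribed phase on the branch in question (supplied by the bifurcation analysis behind \cite{chiba2015}) then forces $\tilde\rho=\rho_*$, i.e.\ $\rho_*$ satisfies $\rho_*(-\theta,-\omega)=\rho_*(\theta,\omega)$. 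Substituting this into
\[
c_n=\int_{\mathbb R}\int_0^{2\pi}e^{in\theta}\rho_*(\theta,\omega)g(\omega)\,d\theta\,d\omega
\]
via the change of variables $(\theta,\omega)\mapsto(-\theta,-\omega)$ and using evenness of $g$ yields $c_n=\overline{c_n}$.

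For part (ii), the ansatz I would try is
\[
\rho(\alpha u;\theta,\omega,t)=\rho_*\bigl(\theta-\phi(t),\,\omega-\Omega\bigr),\qquad \phi(t):=\Omega t+\alpha\int_0^t u(s)\,ds,
\]
where $\rho_*$ is a steady state of the reduced problem with zero input and zero-mean frequency density $g(\cdot+\Omega)$. Applying the change of variables $\theta'=\theta-\phi(t)$, $\omega'=\omega-\Omega$ in the definition \eqref{n-Order} of $r_n(\alpha u;t)$ pulls out a factor $e^{in\phi(t)}$ and leaves the $n$-th order parameter of $\rho_*$, which by (i) equals the real number $c_n$; this gives precisely \eqref{r_n:basis}. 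To verify that the ansatz solves the continuity equation, I would compute $\partial_t\rho=-\dot\phi\,\partial_{\theta'}\rho_*$ and check that the full velocity decomposes as $v=v_*(\theta',\omega')+\Omega+\alpha u$, where $v_*$ is the steady-state velocity of the reduced problem, using $\psi(t)=\phi(t)$ and $|r(t)|=c_1$ from the $n=1$ case of the identity just derived. Since $\dot\phi=\Omega+\alpha u$ and $\partial_{\theta'}(\rho_* v_*)\equiv 0$ by stationarity, the two contributions cancel.

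The main obstacle is the uniqueness step in (i): the symmetry calculation only shows that $\tilde\rho$ is \emph{some} stationary solution with the same order parameter, and deducing $\tilde\rho=\rho_*$ requires invoking uniqueness along the relevant branch (for the trivial incoherent state $\rho_*\equiv 1/(2\pi)$, reality of $c_n$ is immediate). Once this point is handled, everything else reduces to routine changes of variables and a short verification of the PDE.
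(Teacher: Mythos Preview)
Your reduction in part (ii) is essentially the same as the paper's: both introduce the phase $\phi(t)=\Omega t+\alpha\int_0^t u(s)\,ds$ and the frequency shift $\omega\mapsto\omega-\Omega$ to reduce to the steady problem with $\Omega=\alpha=0$, and then read off $r_n=c_n e^{in\phi(t)}$ from the change of variables. Your verification that the ansatz solves the continuity equation is a bit more explicit than the paper's one-line remark, but the content is identical.

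For part (i), however, your route is genuinely different. The paper does not argue via an abstract reflection symmetry plus uniqueness; instead it invokes an explicit formula for the steady state (their Lemma~4.4), namely
\[
\rho_*(\theta,\omega)=
\begin{cases}
\delta\!\left(\theta-\mathrm{Arcsin}\,(\omega/(Kr_*))\right),&|\omega|<Kr_*,\\[4pt]
\dfrac{1}{2\pi}\,\dfrac{\sqrt{\omega^2-K^2r_*^2}}{|\omega-Kr_*\sin\theta|},&|\omega|>Kr_*,
\end{cases}
\]
splits $c_n=a_n+b_n$ according to the two regimes, and shows by direct changes of variables (using evenness of $g$) that each piece is real. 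This is more computational than your symmetry argument, but it has two concrete advantages: it bypasses the uniqueness issue you flag (the explicit formula \emph{is} the uniqueness statement, once $\psi_*=0$ and $r_*$ are fixed), and it produces the closed-form expressions \eqref{an}--\eqref{bn} for $a_n,b_n$ that the paper then reuses in the proof of Theorem~\ref{thm4.6} to show $c_n\neq 0$ near $K_c$. Your approach is cleaner and would in fact also apply: one can check directly on the formula above that $\rho_*(-\theta,-\omega)=\rho_*(\theta,\omega)$, which immediately closes the uniqueness gap you identified without appealing to the bifurcation analysis of \cite{chiba2015}.
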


\begin{rem}
A few remarks are in order.
\begin{enumerate}[\rm (a)]
    \item 
The system \eqref{CKM_input} for $\Omega = \alpha = 0$ always has the steady state $\rho_* = 1/(2\pi)$ (de-synchronous state).
In this case, $c_n = 0$ for $n \neq 0$.
Thus, we are interested in a non-trivial steady state.
For example, when $g(\omega)$ is even and unimodal or uniform distribution, 
it has a non-trivial steady state $\rho_*$ after the bifurcation $(K>K_c)$ to the synchronous state as in Fig. \ref{fig:2}.
In this case, $c_n \not = 0$ in general (see Theorem~\ref{thm4.6} below). 
\item 
For the system \eqref{CKM_input} with $\Omega = \alpha = 0$, 
the steady state $\rho_* = 1/(2\pi)$ is stable when $K<K_c$ and the non-trivial steady state $\rho_*$ is 
stable when $K>K_c$ in some weak topology, see \cite{chiba2015} for the details. 
Moreover, 
the solution $\rho$ in (ii) is stable if $\rho_*$ in (i) is stable. 
\end{enumerate}
\end{rem}

\begin{cor}\label{cor:main}
Under the same assumption as in Theorem \ref{thm:main}, if $c_n \not =0$ for any $n$, $\Omega \not =0$ and $|\alpha|$ is sufficiently small, then 
$\{r_n(\alpha u; t )\}_n$ given in \eqref{r_n:basis} is a complete basis of $L^2 (0,T)$ with $T \sim 2\pi/\Omega + O(\alpha)$. 
\end{cor}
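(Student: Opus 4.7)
The plan is to pull back the completeness question to the standard Fourier basis on $L^2(0,2\pi)$ via a change of variables. By Theorem \ref{thm:main}(ii), $r_n(\alpha u;t) = c_n e^{in\phi(t)}$ where $\phi(t) := \Omega t + \alpha\int_0^t u(s)\,ds$. Since $u$ is bounded and $\Omega\neq 0$, for $|\alpha|$ sufficiently small the derivative $\phi'(t) = \Omega + \alpha u(t)$ has constant sign and is bounded away from zero, so $\phi$ is a $C^1$ diffeomorphism of $\mathbb R$. Taking $\Omega>0$ for concreteness, the equation $\phi(T)=2\pi$ has $T=2\pi/\Omega$ as its unique positive solution at $\alpha=0$, and the implicit function theorem yields a unique positive $T = T(\alpha) = 2\pi/\Omega + O(\alpha)$ for small $|\alpha|$; at this $T$ the endpoint identity $r_n(\alpha u;0) = c_n = c_n e^{2\pi i n} = r_n(\alpha u;T)$ shows that each $r_n$ represents a well-defined element of $L^2_{\mathrm{per}}(0,T)$.

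For completeness I would check that the orthogonal complement of $\{r_n\}_{n\in\mathbb Z}$ is trivial. Suppose $f\in L^2_{\mathrm{per}}(0,T)$ satisfies $\int_0^T f(t)\overline{r_n(\alpha u;t)}\,dt = 0$ for all $n\in\mathbb Z$. Using $c_n\neq 0$ and the substitution $\tau = \phi(t)$, with $dt = d\tau/\phi'(\phi^{-1}(\tau))$, this rewrites as
\[
\int_0^{2\pi} F(\tau)\, e^{-in\tau}\, d\tau = 0 \quad \text{for every } n\in\mathbb Z, \qquad F(\tau) := \frac{f(\phi^{-1}(\tau))}{\phi'(\phi^{-1}(\tau))}.
\]
Because $1/\phi'$ is bounded, $F\in L^2(0,2\pi)$, and completeness of the classical Fourier system forces $F\equiv 0$ a.e., whence $f\equiv 0$ a.e. This establishes totality of $\{r_n\}_n$ in $L^2_{\mathrm{per}}(0,T)$. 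With a little more care the same change of variables shows that $g\mapsto (g/\sqrt{\phi'})\circ\phi^{-1}$ is a unitary map $L^2_{\mathrm{per}}(0,T)\to L^2(0,2\pi)$ sending $\sqrt{\phi'}\,e^{in\phi(t)}$ to $e^{in\tau}$, from which $\{r_n/c_n\}_n$ is a Riesz basis and, multiplying componentwise by the nonzero scalars $c_n$, so is $\{r_n\}_n$.

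The main obstacle is essentially book-keeping rather than analytical: one must quantify the smallness of $|\alpha|$ (in terms of $\|u\|_\infty$ and $|\Omega|$) to ensure that $\phi'$ is signed and bounded away from $0$, and verify the $T$-periodicity of each $r_n$ on $[0,T]$, which holds \emph{only} through the boundary matching $\phi(T)-\phi(0)=2\pi$ and does \emph{not} require $u$ itself to be periodic. The role of $\Omega\neq 0$ is decisive here: if $\Omega=0$ then $\phi'$ can change sign or vanish for any nonzero $\alpha$ depending on $u$, the pullback fails, and the $r_n$ no longer close up into a basis of a single periodic interval. This is precisely why the corollary needs the $\Omega\neq 0$ hypothesis even though Theorem \ref{thm:main} itself does not.
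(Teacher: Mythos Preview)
Your proof is correct and follows essentially the same route as the paper's: define $T$ by $\phi(T)=2\pi$, substitute $\tau=\phi(t)$ to reduce the orthogonality relations to those of the standard Fourier system on $L^2(0,2\pi)$, and conclude $f\equiv 0$. One caveat on your extra remark: passing from the Riesz basis $\{r_n/c_n\}$ to $\{r_n\}$ by multiplying componentwise by the nonzero scalars $c_n$ preserves the Riesz basis property only when $\{|c_n|\}$ is bounded above and away from zero, which is not part of the hypotheses here.
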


Our main results mean that the Kuramoto reservoir has almost the same approximation ability as the Fourier basis 
when $c_n \not =0$ for any $n$, $\Omega \not =0$ and $|\alpha|$ is sufficiently small. 
See Theorem \ref{thm4.6} for a sufficient condition for $c_n \not =0$.

To prove Theorem \ref{thm:main}, we use the following lemma,
which is easily proved by integrating $(\partial / \partial \theta) (\rho v) = 0$. 

\begin{lem}\label{lem:sss}
Let $\Omega=0$ and $\alpha=0$. A steady state solution $\rho_*$ of \eqref{CKM_input} is given as 
\[
\rho_* (\theta,\omega) =
\begin{dcases}
 \delta \left(\theta - \mathrm{Arcsin}\, \left(\frac{\omega}{Kr_*}\right)\right),\quad &|\omega|<Kr_*,\\
\frac{1}{2\pi} \frac{\sqrt{\omega^2 - K^2 r_*^2}}{|\omega- Kr_* \sin \theta |}, &|\omega|>Kr_*,
\end{dcases}
\]
where $\delta$ is the Dirac delta function and 
$r_*$ is the $1$-st order parameter of $\rho_*$.
\end{lem}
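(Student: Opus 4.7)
The plan is to exploit the fact that, for a steady state of \eqref{CKM_input} with $\Omega=0=\alpha$, the continuity equation reduces to the pure conservation law $\partial_\theta(\rho_* v)=0$ on the circle, so that $\rho_*(\theta,\omega)\,v(\theta,\omega)$ is a function of $\omega$ alone. By rotational invariance of the equation (the transformation $\theta \mapsto \theta+\varphi$ leaves \eqref{CKM_input} invariant when $\Omega=\alpha=0$), I may assume without loss of generality that the phase $\psi$ of the order parameter is $0$, hence $r_*\ge 0$ is real and the drift simplifies to $v(\theta,\omega)=\omega-Kr_*\sin\theta$. The analysis then splits naturally at $|\omega|=Kr_*$, the boundary between oscillators that can phase-lock to the mean field and those that drift around the circle.

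First I would handle the drifting regime $|\omega|>Kr_*$. Here $v$ has constant sign $\mathrm{sgn}(\omega)$ and never vanishes, so $\rho_*=C(\omega)/v$ for some $C(\omega)$, and the normalization $\int_0^{2\pi}\rho_*\,d\theta=1$ determines $C(\omega)$ via the classical evaluation
\begin{equation*}
\int_0^{2\pi}\frac{d\theta}{\omega-Kr_*\sin\theta}=\frac{2\pi\,\mathrm{sgn}(\omega)}{\sqrt{\omega^2-K^2r_*^2}}.
\end{equation*}
Substituting back and using $\mathrm{sgn}(\omega)(\omega-Kr_*\sin\theta)=|\omega-Kr_*\sin\theta|$ on this range of $\omega$ gives the second line of the stated formula.

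Next I would treat the locked regime $|\omega|<Kr_*$. Here $v$ has exactly two zeros on $[0,2\pi)$, namely $\theta_s=\mathrm{Arcsin}(\omega/(Kr_*))$ (with $\cos\theta_s>0$) and the reflected zero $\theta_u=\pi-\theta_s$; linearization shows that $\theta_s$ is a stable and $\theta_u$ an unstable equilibrium of the single-oscillator ODE $\dot\theta=v$. Any distributional solution of $\partial_\theta(\rho_* v)=0$ that is a probability measure in $\theta$ must be supported on the zero set $\{\theta_s,\theta_u\}$, hence a convex combination of $\delta(\theta-\theta_s)$ and $\delta(\theta-\theta_u)$. The standard interpretation of $\rho_*$ as the long-time pushforward of an initial density by the flow of $\dot\theta=v$ selects the stable equilibrium $\theta_s$ only, yielding the first line of the formula.

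The main obstacle, and the only nontrivial point, is the locked-case justification: one must rule out any contribution from the unstable fixed point $\theta_u$. I expect this to be done either by a direct dynamical argument (the basin of attraction of $\theta_s$ under the flow of $\dot\theta=v$ is $(0,2\pi)\setminus\{\theta_u\}$, so any absolutely continuous initial density concentrates at $\theta_s$ as $t\to\infty$) or, alternatively, by invoking the steady-state classification from Chiba \cite{chiba2015}. The drifting-regime computation, by contrast, is a direct exercise once one recognizes the integral formula above.
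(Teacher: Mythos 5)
Your proposal is correct and follows essentially the same route as the paper, which proves the lemma exactly by integrating the stationary equation $(\partial/\partial\theta)(\rho v)=0$ and treating the locked ($|\omega|<Kr_*$) and drifting ($|\omega|>Kr_*$) populations separately, with the normalization integral giving the second line. The one point you flag as nontrivial---ruling out mass at the unstable zero $\theta_u=\pi-\theta_s$---is not actually required, since the lemma only asserts that the displayed $\rho_*$ \emph{is a} steady state (with self-consistent $r_*$), not that it is the unique one; the stable-equilibrium choice is the standard physically relevant selection.
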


\begin{proof}[Proof of Theorem \ref{thm:main}]
In order to prove the statement (ii), it is enough to show the statement (i). In fact, for the system \eqref{CKM_input},
change the variables by 
\[
\begin{dcases}
\theta = \tilde{\theta} + \Omega t + \alpha \int_0^t u(s)\,ds,\\
\omega = \tilde{\omega} + \Omega,\\
g(\tilde{\omega}+\Omega) = \tilde{g}(\tilde{\omega}), \\
\rho (\alpha u; \theta, \omega, t) = \tilde{\rho}(\tilde{\theta}, \tilde{\omega},t).
\end{dcases}
\]
Then, it is easy to verify that the system is reduced to the case $\Omega = \alpha = 0$.
More precisely, 
let $\tilde{r}_n$ be the order parameters for the case $\Omega = \alpha = 0$. Then we can calculate $r_n$ as 
\[
\begin{split}
r_n(\alpha u; t) & = \int_{\mathbb R} \int_0^{2\pi} e^{in\theta}\rho(\alpha u;\theta, \omega, t) g(\omega)\, d\theta d\omega\\
& =  \int_{\mathbb R} \int_0^{2\pi} e^{in\tilde{\theta}} e^{in (\Omega t + \alpha \int_0^t u(s)\, ds)} 
        \tilde{\rho}(\tilde{\theta}, \tilde{\omega}, t) \tilde{g}(\tilde{\omega})\, d\tilde{\theta} d\tilde{\omega}\\
& = \tilde{r}_n(t) e^{in (\Omega t + \alpha \int_0^t u(s)\, ds)},
\end{split}
\]
Therefore, once we prove $\tilde{r}_n(t) =: c_n \in \mathbb{R}$ when $\tilde{\rho} = \rho_*$ is a steady state, we conclude the statement (ii).

Suppose $\Omega=0$ and $\alpha=0$, and let $\rho_*$ be a steady state. 
By Lemma~\ref{lem:sss}, its $n$-th order parameters are expressed as 
\[
\begin{split}
r_n & = 
 \int_{\mathbb R} \int_0^{2\pi} e^{in\theta}\rho_*(\theta, \omega) g(\omega)\, d\theta d\omega\\
 & = 
 \int_{|\omega|< Kr_*} \int_0^{2\pi} 
  e^{in\theta}\delta \left(\theta - \mathrm{Arcsin}\, \left(\frac{\omega}{Kr_*}\right)\right) g(\omega)\, d\theta d\omega\\
 &\qquad\qquad \qquad  + \frac{1}{2\pi} \int_{|\omega|>Kr_*} \int_0^{2\pi} e^{in\theta} 
                           \frac{\sqrt{\omega^2 - K^2 r_*^2}}{|\omega- Kr_* \sin \theta |} g(\omega)\, d\theta d\omega\\
 & =: a_n + b_n =: c_n.
\end{split}
\]

Since $g(\omega)$ is an even function about $0$, 
we write 
\[
\begin{split}
a_n  = 
\int_{|\omega|< Kr_*} e^{in \mathrm{Arcsin}\, \left(\frac{\omega}{Kr_*}\right)} g(\omega)\, d\omega
 =
\int_{|\omega|< Kr_*} e^{-in \mathrm{Arcsin}\, \left(\frac{\omega}{Kr_*}\right)} g(\omega)\, d\omega. 
\end{split}
\]
By Euler's formula, this implies that $a_n$ is a real number for any $n$ as
\begin{equation}
a_n = \int_{|\omega|< Kr_*} \cos \left(n\cdot \mathrm{Arcsin}\, \left(\frac{\omega}{Kr_*}\right)\right) g(\omega)\, d\omega \in \mathbb R.
\label{an}
\end{equation}

As for $b_n$, 
by making the change
$(\theta,\omega) \mapsto (\theta+\pi, -\omega)$, we have
\[
b_n = 
\frac{1}{2\pi} \int_{|\omega|>Kr_*} \int_{-\pi}^{\pi} e^{in\theta} e^{in\pi} A(\omega,\theta) g(\omega)\, d\theta d\omega 
=
\begin{cases}
-b_n\quad &\text{if $n$ is odd},\\
b_n\quad &\text{if $n$ is even},
\end{cases}
\]
where 
\[
A(\omega,\theta):=
\frac{\sqrt{\omega^2 - K^2 r_*^2}}{|\omega- Kr_* \sin \theta |} \in \mathbb R.
\]
This implies $b_n = 0$ if $n$ is odd (this fact will be used later). 
We can also write $b_n$ as
\[
b_n = 
\frac{1}{2\pi} \int_{|\omega|>Kr_*}\left( \int_{-\frac{\pi}{2}}^{\frac{\pi}{2}}
        +  \int_{\frac{\pi}{2}}^{\frac{3\pi}{2}}\right) e^{in\theta} A(\omega,\theta) g(\omega)\, d\theta d\omega ,
\]
and by making the change
$(\theta,\omega) \mapsto (\theta + \pi, -\omega)$ for the second integral, for any even $n$, we have 
\begin{align}
b_n &= 
\frac{1}{\pi}
 \int_{|\omega|>Kr_*}\int_{-\frac{\pi}{2}}^{\frac{\pi}{2}} \cos (n\theta) A(\omega,\theta) g(\omega)\, d\theta d\omega \nonumber \\ 
&= \frac{2}{\pi}
 \int^{\infty}_{Kr_*} \int_{-\frac{\pi}{2}}^{\frac{\pi}{2}} \cos (n\theta) A(\omega,\theta) g(\omega)\, d\theta d\omega \in \mathbb{R}.
 \label{bn}
\end{align}
Thus, $c_n \in \mathbb{R}$ and the proof of Theorem \ref{thm:main} is completed.
\end{proof}

To prove Corollary \ref{cor:main}, we use the following lemma.

\begin{lem}\label{lem:complete}
Let $\{g_k\}_k$ be a family of elements in a Hilbert space $H$ with inner product $(\cdot,\cdot)_H$. Suppose that $f=0$
if $f \in H$ and $(f, g_k)_{H}=0$ for any $k$. Then $\{g_k\}_k$ is complete in $H$.
\end{lem}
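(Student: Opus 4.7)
The plan is to prove this lemma by contradiction, relying on the orthogonal decomposition theorem in Hilbert spaces. First I would pin down the meaning of ``complete'': for a family in a Hilbert space, the standard reading is that its closed linear span coincides with the whole space, equivalently that finite linear combinations of the $g_k$ are dense in $H$. That is the conclusion I would target, and it is exactly what the subsequent application to Corollary \ref{cor:main} requires.

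Next I would set $V := \overline{\operatorname{span}\{g_k\}_k}$, which is a closed subspace of $H$, and aim to show $V = H$. By the orthogonal decomposition theorem we have $H = V \oplus V^\perp$, so it is enough to prove $V^\perp = \{0\}$. For any $f \in V^\perp$, the definition of the orthogonal complement gives $(f,v)_H = 0$ for every $v \in V$, and in particular $(f,g_k)_H = 0$ for every index $k$. The standing hypothesis of the lemma then forces $f = 0$. Hence $V^\perp = \{0\}$ and $V = H$, which is precisely completeness.

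There is essentially no technical obstacle here: the argument is a textbook application of the projection theorem, and uses nothing beyond the definition of the inner product, the closedness of $V$, and the hypothesis. The one conceptual point worth flagging is the interpretation of ``complete''; in the downstream use, what is actually needed is the density of the linear span of $\{r_n(\alpha u;\cdot)\}_n$ in $L^2_{\mathrm{per}}(0,T)$, not any stronger notion such as orthonormality or the existence of a biorthogonal dual family, and the proof above delivers exactly this density.
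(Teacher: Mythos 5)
Your argument is correct: the paper itself states this lemma without proof, treating it as a standard fact, and your projection-theorem argument (take $V=\overline{\operatorname{span}\{g_k\}_k}$, use $H=V\oplus V^{\perp}$, and note the hypothesis forces $V^{\perp}=\{0\}$) is exactly the canonical proof one would supply. Your reading of ``complete'' as density of the linear span is also the one the paper needs in the proof of Corollary \ref{cor:main}, so there is nothing to add.
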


\begin{proof}[Proof of Corollary \ref{cor:main}] 
Define $T>0$ by 
\[
2\pi =\Omega T + \alpha \int_0^T u(s)\,ds. 
\]
For any $f \in L^2(0,T)$, we suppose 
\[
\int_0^T f(t) e^{in (\Omega t + \alpha \int_0^t u(s)\, ds)}\, dt =0
\]
for any $n$. 
Put $\tilde{t} = \Omega t + \alpha \int_0^t u(s)\,ds$. 
If $u$ is bounded and continuous, $\Omega \not =0$, and $|\alpha|$ is sufficiently small, then 
the map $\eta:t\mapsto\tilde t$ is one-to-one and of $C^1$.
Hence,
the inverse function theorem gives
\[
\int_0^{2\pi} \frac{f(\eta^{-1}(\tilde t))}{\Omega + \alpha u(\eta^{-1}(\tilde{t}))} e^{in \tilde t}\, d\tilde t =0
\]
for any $n$. Since $\{e^{in \tilde t}\}_n$ is an orthogonal basis of $L^2(0,2\pi)$, this implies 
that 
\[
\frac{f(\eta^{-1}(\tilde t))}{\Omega + \alpha u(\eta^{-1}(\tilde{t}))} =0.
\]
Since $\Omega + \alpha u(\eta^{-1}(\tilde{t})) \neq 0$ if $|\alpha|$ is sufficiently small, we have $f=0$. 
Therefore, $\{e^{in (\Omega t + \alpha \int_0^t u(s)\, ds)}\}_n$ is a complete basis of $L^2(0,T)$
by Lemma \ref{lem:complete}. 
Thus, we conclude Corollary \ref{cor:main}. 
\end{proof}

Now we give a sufficient condition for $c_n \not =0$.
\begin{thm}\label{thm4.6}
Assume that $g(\omega)$ is either uniform distribution or even and unimodal $C^2$ function satisfying $g''(0) < 0$.
Then, there exists $\varepsilon > 0$ such that when $K_c<K<K_c+\varepsilon$, 
the constants $c_n$ given in Theorem \ref{thm:main} are nonzero for any $n \in \mathbb{Z}$.
\end{thm}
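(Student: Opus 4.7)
The strategy is to leverage the explicit decomposition $c_n = a_n + b_n$ derived in the proof of Theorem \ref{thm:main} and carry out an asymptotic analysis as $K \to K_c^+$. The indices $n = 0, 1$ are immediate: $c_0 = 1$ by mass conservation of $\rho_*$ in $\theta$ (giving $c_0 = \int_{\mathbb{R}} g(\omega)\,d\omega = 1$), and $c_1 = r_* > 0$ for $K > K_c$ by the Kuramoto conjecture proved in \cite{chiba2015}. The remaining work concerns $n \geq 2$.

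For the unimodal $C^2$ case, where $r_* \to 0^+$ as $K \to K_c^+$, I would rescale $\omega = K r_* v$ and use $v = \sin\phi$ in \eqref{an} and $v = \cosh t$ in \eqref{bn} (so that $q(\omega) = e^{-t}$), rewriting
\begin{equation*}
    b_n = \cos(n\pi/2)\,K r_* \int_0^\infty g(K r_* \cosh t)\bigl(e^{-(n-1)t} - e^{-(n+1)t}\bigr)\,dt \qquad (n \text{ even}),
\end{equation*}
with $b_n = 0$ for odd $n$. The expansion $g(K r_* v) = g(0) + \tfrac{1}{2}g''(0)(K r_*)^2 v^2 + o((K r_*)^2)$ (valid uniformly on bounded sets by evenness and $C^2$-regularity), combined with dominated convergence via the exponential factor, yields the $Kr_*$-expansion of $c_n$. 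A key observation is that, for every even $n \geq 2$, the leading $O(K r_*)$ contributions from $a_n$ and $b_n$ cancel via the identity $I_n + 2(-1)^{n/2}/(n^2 - 1) = 0$, where $I_n := \int_{-\pi/2}^{\pi/2}\cos(n\phi)\cos\phi\,d\phi$. Hence the first nonzero contribution is of order $(K r_*)^3$ and proportional to $g''(0) < 0$ multiplied by an explicit structural integral. For odd $n \geq 3$, the symmetry argument already used in the proof of Theorem \ref{thm:main} gives $b_n = 0$, and $I_n = 0$ makes the locked leading term also vanish; the next-order coefficient involves $g''(0) \cdot \int_{-\pi/2}^{\pi/2}\cos(n\phi)\cos\phi\sin^2\phi\,d\phi$, which is nonzero for $n = 3$ (equal to $-\pi/8$). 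The uniform distribution case is qualitatively different, as $r_*$ does not tend to zero but rather $r_* \to \gamma/K_c$ with $K_c = 4\gamma/\pi$; once $K r_* > \gamma$ all oscillators become locked, so $b_n$ vanishes and $c_n = a_n$ reduces to an elementary trigonometric integral involving the cutoff $\phi_0 = \arcsin(\gamma/(K r_*))$, which can then be Taylor-expanded in $\delta := \pi/2 - \phi_0$ around $K = K_c^+$ to verify non-vanishing.

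The principal obstacle is producing a single $\varepsilon > 0$ valid \emph{simultaneously} for all $n$: the expansion above gives an $n$-dependent threshold $\varepsilon_n$, and for odd $n \geq 5$ the structural integral $\int_{-\pi/2}^{\pi/2}\cos(n\phi)\cos\phi\sin^2\phi\,d\phi$ also vanishes, requiring more refined control. I would address this by invoking the closed-form identity $\int_{-\pi/2}^{\pi/2} \sin(n\phi)\sin^n\phi\,d\phi = (-1)^{(n-1)/2}\pi/2^n$ (derivable by integration by parts and Euler's formula) to extract the leading nonzero contribution for each $n$, and by exploiting the exponential decay $e^{-(n-1)t}$ of the drifting kernel to bound remainders uniformly in $n$, yielding a single $\varepsilon$ effective for the full family $\{c_n\}_{n \geq 0}$.
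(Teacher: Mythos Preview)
Your approach differs from the paper's in a substantive way. The paper does not Taylor-expand $g$; instead it applies the second mean value theorem for integrals (Lemma~\ref{lem:smvt}) to $a_n = 2Kr_*\int_0^1 \cos(n\,\mathrm{Arcsin}\,x)\,g(Kr_*x)\,dx$, producing
\[
a_n = 2Kr_*\,g(0)\int_0^1\cos(n\,\mathrm{Arcsin}\,x)\,dx + (Kr_*)^3\bigl(g''(0)+o(Kr_*)\bigr)\int_\xi^1\cos(n\,\mathrm{Arcsin}\,x)\,dx,
\]
and then, for odd $n$, analyses the zeros of $F_n(\eta)=\int_\eta^{\pi/2}\cos(ny)\cos y\,dy$ near $\eta=\pi/2$. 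For $b_n$ the paper again uses the second mean value theorem, obtaining $b_n = B\sin(n\xi)/n$ with $B\sim Kr_*\log(Kr_*)$, and finishes by a three-case dominance argument. Your route through the hyperbolic substitution $\omega=Kr_*\cosh t$ and the closed form $b_n=\cos(n\pi/2)\,Kr_*\int_0^\infty g(Kr_*\cosh t)(e^{-(n-1)t}-e^{-(n+1)t})\,dt$ is cleaner and shows directly that the $O(Kr_*)$ parts of $a_n$ and $b_n$ cancel exactly for every even $n$ --- a sharper statement than the paper's case analysis, which only bounds $b_n$ by $O(Kr_*\log(Kr_*))$.

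The gap is at odd $n\ge 5$. You correctly observe that the $(Kr_*)^3$ coefficient $\int_{-\pi/2}^{\pi/2}\cos(n\phi)\cos\phi\sin^2\phi\,d\phi$ vanishes there (indeed $\cos\phi\sin^2\phi=\tfrac14(\cos\phi-\cos3\phi)$ is $L^2$-orthogonal to $\cos(n\phi)$ on $[-\pi/2,\pi/2]$ for odd $n\ge5$). But your proposed remedy --- pushing to the identity $\int_{-\pi/2}^{\pi/2}\sin(n\phi)\sin^n\phi\,d\phi=(-1)^{(n-1)/2}\pi/2^n$ --- implicitly requires expanding $g(Kr_*\sin\phi)$ to order $n-1$, i.e.\ it needs $g^{(n-1)}(0)$, whereas the hypothesis is only $g\in C^2$. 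Under $C^2$ alone your expansion yields $c_n=o((Kr_*)^3)$ for odd $n\ge5$, from which nonvanishing cannot be deduced. The paper's second mean value theorem is precisely the device that avoids higher derivatives of $g$: it trades the Taylor remainder for the explicit integral $\int_\xi^1\cos(n\,\mathrm{Arcsin}\,x)\,dx$, whose nonvanishing is then a question about a single elementary function $F_n$, independent of the regularity of $g$. If you want to keep your Taylor-based route, you either need to strengthen the hypothesis on $g$ to $C^\infty$ (or analytic), or supplement it for odd $n\ge5$ with a mean-value-type localisation as in the paper.
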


\begin{cor}[\textbf{edge of bifurcation}]\label{cor4.7}
Assume that the input $u(t)$ is bounded and continuous, $g(\omega )$ is as Theorem \ref{thm4.6} with the mean $\Omega $.
If $\Omega \neq 0$ and $|\alpha|$ is sufficiently small, 
then there exists $\varepsilon > 0$ such that when $K_c< K<K_c + \varepsilon$, 
the order parameters $\{r_n(\alpha u, t)\}_n$ for the system \eqref{CKM_input} 
gives a complete basis of $L^2 (0,T)$ with $T \sim 2\pi/\Omega + O(\alpha)$. 
\end{cor}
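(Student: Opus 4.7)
The plan is to assemble Corollary \ref{cor4.7} by chaining together the three results already established: Theorem \ref{thm:main} gives the explicit form of $r_n$, Theorem \ref{thm4.6} asserts that the coefficients $c_n$ are nonzero for $K$ slightly above $K_c$, and Corollary \ref{cor:main} then converts "all $c_n \neq 0$" into a completeness statement. So the work is essentially combinatorial rather than analytic, provided one minor gap is bridged.

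First I would invoke Theorem \ref{thm:main}(ii), which yields
\[
r_n(\alpha u;t) = c_n\, e^{in(\Omega t + \alpha \int_0^t u(s)\,ds)}, \qquad n \in \mathbb{Z},
\]
with the constants $c_n$ real. Next I would apply Theorem \ref{thm4.6}: under the stated assumptions on $g$ there exists $\varepsilon>0$ such that for $K_c < K < K_c+\varepsilon$ one has $c_n \neq 0$ for every $n = 0, 1, 2, \ldots$. To then reach \emph{all} $n \in \mathbb{Z}$, I would appeal to the symmetry $c_{-n} = c_n$; this is immediate from the explicit formulas \eqref{an} and \eqref{bn} in the proof of Theorem \ref{thm:main}, since $\cos(n\cdot) = \cos(-n\cdot)$ forces both $a_n$ and $b_n$ to be even in $n$. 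Hence $c_n \neq 0$ for all $n \in \mathbb{Z}$.

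Having secured the hypotheses of Corollary \ref{cor:main} (all $c_n$ nonzero, $\Omega \neq 0$, and $|\alpha|$ sufficiently small so that $\Omega + \alpha u(t)$ does not vanish and the map $t \mapsto \Omega t + \alpha\int_0^t u\,ds$ is a $C^1$ diffeomorphism onto $[0, 2\pi]$ for an appropriate $T \sim 2\pi/\Omega + O(\alpha)$), I would directly conclude that the family
\[
\bigl\{ e^{in(\Omega t + \alpha \int_0^t u(s)\,ds)} \bigr\}_{n \in \mathbb{Z}}
\]
is a complete basis of $L^2_{\mathrm{per}}(0,T)$. Multiplying each element by the nonzero real scalar $c_n$ preserves completeness, so $\{r_n(\alpha u; t)\}_n$ is itself complete in $L^2_{\mathrm{per}}(0,T)$, which is exactly the claim.

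The main obstacle is almost entirely housed upstream: verifying $c_n \neq 0$ for $n \geq 1$ near the bifurcation (Theorem \ref{thm4.6}) is where the real content sits, as it requires exploiting the asymptotic form \eqref{kuramoto} of $r_*$ and carefully controlling the integrals \eqref{an} and \eqref{bn}. Within the proof of Corollary \ref{cor4.7} itself, the only subtle point to state carefully is the threshold on $|\alpha|$ that guarantees simultaneously (a) the change of variables $\tilde t = \Omega t + \alpha\int_0^t u\,ds$ is invertible on the relevant interval, and (b) the denominator $\Omega + \alpha u$ stays away from zero; both follow from $\Omega \neq 0$ together with the boundedness of $u$.
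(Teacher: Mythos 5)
Your proposal is correct and follows essentially the same route as the paper, which simply notes that Corollary \ref{cor4.7} follows immediately by combining Theorem \ref{thm:main}, Theorem \ref{thm4.6}, and Corollary \ref{cor:main}. Your extra observation that $c_{-n}=c_n$ (from the evenness in $n$ of \eqref{an} and \eqref{bn}, or equivalently $r_{-n}=\overline{r_n}$ with $c_n$ real) is a small detail the paper leaves implicit, and it correctly extends the nonvanishing of $c_n$ to all $n\in\mathbb{Z}$.
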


Corollary \ref{cor4.7} immediately follows from Corollary \ref{cor:main} and Theorem \ref{thm4.6}.
As a result, we can express any function $y_{\mathrm{tar}}$ in $L^2(0,T)$ 
as a linear combination of the order parameters $r_n(\alpha u,t)$ of the system \eqref{CKM_input}.
According to \eqref{key_representation}, 
we can find numbers $\{w_n\}_n$ (i.e. readout $W_{\text{out}}(\theta)$) such that 
\begin{equation}
|| y^{(M)} - y_{\mathrm{tar}} ||_{L^2(0,T)} \to 0,\,\, (M \to \infty),
\label{approx}
\end{equation}
where $y^{(M)}(t) = \sum^M_{-M}w_n r_n (\alpha u; t)$ is the partial sum.
\\

In the proof of Theorem \ref{thm4.6}, 
we use the second mean value theorem for definite integrals.

\begin{lem}\label{lem:smvt}
Let $a<b$. Suppose $f : [a,b]\to \mathbb R$ is integrable and $\varphi : [a,b]\to\mathbb R$ is monotone and bounded. 
Then there exists $\xi \in (a,b)$ such that 
\[
\int_a^b f(x)\varphi(x)\, dx = \varphi(a)\int_a^\xi f(x)\, dx 
+
\varphi(b)\int_\xi^b f(x)\, dx.
\]
\end{lem}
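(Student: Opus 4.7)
The plan is to prove the classical Bonnet form of the second mean value theorem by reducing to a cleaner sub-case, applying Abel summation to Riemann sums of $f\cdot\varphi$, and then invoking the intermediate value theorem for the continuous antiderivative $F(x):=\int_a^x f(t)\,dt$.

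First I would perform a standard reduction. Replacing $\varphi$ by $-\varphi$ if necessary, assume $\varphi$ is non-increasing. Then set $\psi(x):=\varphi(x)-\varphi(b)$, so that $\psi$ is non-increasing, non-negative, and $\psi(b)=0$. If I can establish the identity $\int_a^b f(x)\psi(x)\,dx=\psi(a)\int_a^\xi f(x)\,dx$ for some $\xi\in(a,b)$, then expanding $\psi(a)=\varphi(a)-\varphi(b)$ and rearranging with $\int_a^b f\,dx=\int_a^\xi f+\int_\xi^b f$ recovers the formula in the statement. Thus it suffices to handle a non-increasing $\psi\ge 0$ with $\psi(b)=0$.

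The central step is the following two-sided bound: for any partition $P:a=x_0<x_1<\dots<x_n=b$, the Riemann--Stieltjes sum
\[
S(P):=\sum_{k=0}^{n-1}\psi(x_k)\bigl(F(x_{k+1})-F(x_k)\bigr)
\]
lies in $[\psi(a)\min_{[a,b]}F,\;\psi(a)\max_{[a,b]}F]$. This is obtained by Abel summation: writing $B_k:=F(x_{k+1})$ (so $F(x_{k+1})-F(x_k)=B_k-B_{k-1}$ with $B_{-1}=0$), one rearranges to
\[
S(P)=\psi(x_{n-1})F(b)+\sum_{k=0}^{n-2}\bigl(\psi(x_k)-\psi(x_{k+1})\bigr)F(x_{k+1}).
\]
The coefficients $\psi(x_k)-\psi(x_{k+1})\ge 0$ and $\psi(x_{n-1})\ge 0$ are non-negative and sum to $\psi(a)$ (telescoping), so $S(P)$ is a weighted average of values of $F$, scaled by $\psi(a)$; this yields the claimed bound. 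Since $\psi$ is monotone and bounded it is Riemann integrable, hence $S(P)\to\int_a^b f(x)\psi(x)\,dx$ as $|P|\to 0$, and the bound passes to the limit. If $\psi(a)=0$ then $\psi\equiv 0$ and the conclusion is trivial; otherwise dividing by $\psi(a)$ and applying the intermediate value theorem to the continuous function $F$ on $[a,b]$ produces $\xi\in[a,b]$ with $F(\xi)=\psi(a)^{-1}\int_a^b f\psi\,dx$, which is the required identity.

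The two delicate points will be the limit passage $S(P)\to\int_a^b f\psi\,dx$ (for which I would invoke that the product of a Riemann integrable function with a monotone function is Riemann integrable, and interpret $S(P)$ as a Riemann--Stieltjes sum converging to $\int_a^b\psi\,dF=\int_a^b f\psi\,dx$) and guaranteeing that $\xi$ may be chosen in the \emph{open} interval $(a,b)$. The latter is the main technical nuisance: IVT directly gives $\xi\in[a,b]$, but one checks that the endpoint values are attained only in degenerate situations (e.g.\ $F$ constant on a neighbourhood of an endpoint, in which case any nearby interior point also works, or $\int_a^b f\psi\,dx$ equal to $\psi(a)F(a)=0$ or $\psi(a)F(b)$, both of which can be realized by some interior $\xi$ via continuity of $F$ unless $f\equiv 0$, where the statement is vacuous). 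This bookkeeping aside, the rest of the argument is a routine assembly of Abel summation and IVT.
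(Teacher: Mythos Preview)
The paper does not prove this lemma; it is quoted as the classical second mean value theorem for definite integrals and invoked without proof in the argument for Theorem~\ref{thm4.6}. There is thus no proof in the paper to compare against.

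Your argument (reduce to a non-increasing, non-negative $\psi$ with $\psi(b)=0$, Abel-summate the Riemann--Stieltjes sums for $\int\psi\,dF$, pass to the limit, then apply the intermediate value theorem to the continuous $F$) is the standard one and correctly produces some $\xi\in[a,b]$. You are right that upgrading to $\xi\in(a,b)$ is the delicate point, but your suggestion that it is mere ``bookkeeping'' is too optimistic: with the endpoint values $\varphi(a),\varphi(b)$ as written (rather than one-sided limits), the open-interval conclusion is in fact false in general. For a counterexample take $f\equiv 1$ on $[a,b]$ and $\varphi(a)=1$, $\varphi(x)=0$ for $x\in(a,b]$; then the identity reads $0=\xi-a$, forcing $\xi=a$. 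The lemma is correct with $\xi\in[a,b]$, or with $\xi\in(a,b)$ under the extra hypothesis that $\varphi$ is continuous at the endpoints (equivalently, with $\varphi(a),\varphi(b)$ replaced by $\varphi(a^+),\varphi(b^-)$). In the paper's application the function $\varphi(\theta)$ is continuous, so this distinction is immaterial there.
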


\begin{proof}[Proof of Theorem \ref{thm4.6}] 
We prove the theorem only for the even and unimodal case, since 
the proof for the uniform distribution is similar.
It is trivial that $c_0 (= r_0) =1$ by the definition. 
Since we assume $K_c<K$, the constant $c_1 (=r_*)$ is positive because of \eqref{kuramoto} ($r_* \sim O(\sqrt{K-K_c})$). 
Thus, we assume $n\geq 2$ (note that $c_{n} = c_{-n}$).
By (\ref{an}) and (\ref{bn}), $c_n$ is given by 
\begin{align*}
c_n &= a_n + b_n \\
&= \int_{|\omega|< Kr_*}  \cos \left(n \cdot\mathrm{Arcsin}\, \left(\frac{\omega}{Kr_*}\right)\right) g(\omega)\, d\omega \\
& \qquad + \frac{2}{\pi}
 \int^{\infty}_{Kr_*} \int_{-\frac{\pi}{2}}^{\frac{\pi}{2}} \cos (n\theta) A(\omega,\theta) g(\omega)\, d\theta d\omega.
\end{align*}
(for the uniform distribution $g(\omega)$, we can calculate this integral and a proof is more easy).

First, $a_n$ is estimated as follows: By $\omega \mapsto Kr_*x$, we have
\[
a_n = 2Kr_* \int^1_0 \cos (n\cdot \mathrm{Arcsin}\, x) g(Kr_*x)dx.
\]
Due to Lemma \ref{lem:smvt}, there exists $0< \xi < 1$ such that 
\[a_n = 2Kr_* g(0) \int^\xi_0 \cos (n\cdot \mathrm{Arcsin}\, x)dx
 + 2Kr_* g(Kr_*)\int^1_\xi \cos (n\cdot \mathrm{Arcsin}\, x) dx.
\]
We estimate it around the bifurcation point ($K \sim K_c$, \,$Kr_* \sim 0$ and $\xi \sim 1$) as
\begin{align*}
a_n &= 2Kr_* g(0) \int^1_0 \cos (n\cdot \mathrm{Arcsin}\, x) dx
       -2Kr_* g(0) \int^1_\xi \cos (n\cdot \mathrm{Arcsin}\, x) dx \\
& \qquad  + 2Kr_* \left( g(0) + \frac{1}{2}(Kr_*)^2 g''(0) + o((Kr_*)^2) \right)  \int^1_{\xi}\cos (n\cdot \mathrm{Arcsin}\, x) dx \\
& = 2Kr_* g(0) \int^1_0 \cos (n\cdot \mathrm{Arcsin}\, x) dx \\
& \qquad + (Kr_*)^3 (g''(0)+o(Kr_*)) \cdot \int^1_\xi \cos (n\cdot \mathrm{Arcsin}\, x)dx,
\end{align*}
where we used $g'(0) = 0$ because $g$ is an even function.
The first integral is calculated as
\begin{align}
\int^1_0 \cos (n\cdot \mathrm{Arcsin}\, x) dx = -\frac{\cos (n\pi/2)}{n^2-1}.
\label{an1}
\end{align}
For the second one, putting $x=\sin y,\, \xi = \sin \eta$ gives
\begin{align}
F_n(\eta) & := \int^1_\xi \cos (n\cdot \mathrm{Arcsin}\, x)dx
 = \int^{\pi/2}_\eta \cos (ny) \, \cos y \, dy, \nonumber \\
 &= \frac{1}{1-n^2}\left( \cos ((\pi n)/2) - \cos (n\eta)\sin (\eta) + n\cdot \sin (n\eta) \cos (\eta) \right).
\label{an2}
\end{align}
The decay rate $F_n(\eta) \sim O(n)$ as $n\to \infty$ will be used in Section \ref{sec:prediction}.
\vskip.5\baselineskip

\textbf{(I)} Suppose that $n$ is an odd number.
Then, the above value (\ref{an1}) is zero.
Further, we have proved in the proof of Theorem \ref{thm:main} that $b_n = 0$.
Thus, we have
\[
c_n = a_n + b_n = (Kr_*)^3 (g''(0)+o(Kr_*)) \cdot F_n(\eta).
\]
Note that $\xi = 1$ corresponds to $\eta = \pi/2$.
 Then, we can verify that $F_n(\pi/2) = F_n'(\pi/2)  =F_n''(\pi/2) =0$ and
 \[F_n'''(\pi/2) = -2n \sin (\pi n/2) \neq 0 \,\, (n: \text{odd}).
 \]
 This proves that $F_n(\eta) \neq 0$ for small $Kr_*>0$ and there exists $\varepsilon$ such that $c_n \neq 0$ for $K_c < K < K_c + \varepsilon$.
 Note that $\varepsilon$ is independent of $n$ because so is the factor $g''(0)+o(Kr_*)$. 
\vskip.5\baselineskip

\textbf{(II)} Suppose $n$ is an even number. 
In this case, $b_n \neq 0$ and 
\begin{align}
c_n &=  -2Kr_* g(0)\frac{\cos (n\pi/2)}{n^2-1} + (Kr_*)^3 (g''(0)+o(Kr_*)) \cdot \int^1_\xi \cos (n\cdot \mathrm{Arcsin}\, x)dx \nonumber \\
&\qquad + \frac{2}{\pi}
 \int^{\infty}_{Kr_*} \int_{-\frac{\pi}{2}}^{\frac{\pi}{2}} \cos (n\theta) A(\omega,\theta) g(\omega)\, d\theta d\omega.
 \label{cn}
\end{align}
Let us estimate the third term $b_n$.
We apply Lemma \ref{lem:smvt} with 
\[
f(\theta) := 
 \cos (n\theta)
\quad \text{and}\quad 
\varphi(\theta) := \frac{2}{\pi}
\int^{\infty}_{Kr_*}A(\omega,\theta) g(\omega)\, d\omega.
\]
Let us show that $\varphi (\theta )$ is monotonically increasing.
It is easy to see that the integral exists and $\partial A /\partial \theta >  0$
 for $\omega > Kr_*$ and $-\pi/2 < \theta < \pi /2$.
 This yields
 \[ \varphi (\theta_1) - \varphi (\theta_2)
  = \frac{2}{\pi}
 \int^{\infty}_{Kr_*} (A(\omega,\theta_1) - A(\omega,\theta_2)) g(\omega)\, d\omega > 0 \]
for any $-\pi/2 \leq \theta_2 < \theta_1 \leq \pi /2$.
Hence, Lemma \ref{lem:smvt} is applicable to obtain
\begin{align*}
b_n & =   \varphi\left(-\frac{\pi}{2}\right)
\int_{-\frac{\pi}{2}}^\xi \cos (n\theta)\, d\theta
+
\varphi\left(\frac{\pi}{2}\right)
\int_\xi^{\frac{\pi}{2}} \cos (n\theta)\, d\theta \label{bn2}\\
& = 
\frac{1}{n} \sin (n\xi) \left(
\varphi\left(-\frac{\pi}{2}\right) - \varphi\left(\frac{\pi}{2}\right)
\right). \nonumber
\end{align*}
To calculate it, note that 
\[
A\left(\omega , -\frac{\pi}{2}\right) - A\left(\omega , \frac{\pi}{2}\right)
= \frac{\sqrt{\omega ^2- K^2r_*^2}}{\omega +Kr_*} - \frac{\sqrt{\omega ^2- K^2r_*^2}}{\omega -Kr_*}
=- \frac{2Kr_*}{\sqrt{\omega ^2-K^2r_*^2}}
\]
when $\omega > Kr_*$.
This provides
\begin{align*}
& \varphi\left(-\frac{\pi}{2}\right) - \varphi\left(\frac{\pi}{2}\right)
 = -\frac{4Kr_*}{\pi}\int_{Kr_*}^\infty \frac{g(\omega )}{\sqrt{\omega ^2-K^2r_*^2}}d\omega \\
&=  -\frac{4Kr_*}{\pi}\int_{Kr_*}^{Kr_* + \delta} \frac{g(\omega )}{\sqrt{\omega ^2-K^2r_*^2}}d\omega
-\frac{4Kr_*}{\pi}\int_{Kr_* + \delta}^\infty \frac{g(\omega )}{\sqrt{\omega ^2-K^2r_*^2}}d\omega,
\end{align*} 
where $\delta$ is a small positive number.
The second term is of order $O(Kr_*)$ as $Kr_* \to 0$.
For the first term, we can verify that it is nonzero and $O(Kr_* \log (Kr_*))$ as $Kr_* \to 0$. Putting
\[
\varphi\left(-\frac{\pi}{2}\right) -  \varphi\left(\frac{\pi}{2}\right) =B,
\]
we obtain
\begin{equation}
b_n = B \frac{\sin (n \xi)}{n}, \quad B\sim O(Kr_* \log (Kr_*)),\, B\neq 0.
\label{bn2}
\end{equation}
Now we have estimations of all three terms of (\ref{cn}) for small $Kr_*>0$.
 \vskip.5\baselineskip
Case (II-1) If the third term is the leading term as $Kr_* \to 0$, 
$c_n \neq 0$ because $B\neq 0$.
\vskip.5\baselineskip
Case (II-2) If the first term is the leading term as $Kr_* \to 0$ (it may happen,
for instance, accidentally $\sin (n\xi) = 0$), then obviously $c_n \neq 0$.
\vskip.5\baselineskip
Case (II-3) If the second term is the leading term (it happens when the first and the third terms cancel each other).
Then, the proof is the same as the case (I).
For even $n$, we can verify that $F_n(\pi/2) = F'(\pi/2) = 0$ and 
\[F_n'' (\pi/2) = \cos (n\pi/2) \neq 0 \,\, (n: \text{even}).\]
Thus, we obtain $c_n \neq 0$.

This completes the proof of Theorem \ref{thm4.6}. 
\end{proof}

In summary, we have obtained the following results to explain the edge of bifurcation:
\begin{itemize}
  \item We need $K_c < K$ (after bifurcation) because when $K_c > K$, the trivial steady state $r_n \equiv 0 \, (n\neq 0)$ is stable.
  \item In particular, just after the bifurcation $K_c < K < K_c + \varepsilon$, we could prove $r_n \neq 0$.
  \item We need $\Omega \neq 0$ because when $\Omega =0$, Corollary \ref{cor:main} does not hold. 
  This means that we need a bifurcation from the steady state to periodic state 
(if $\Omega = 0$, the bifurcation is from the steady state to the steady state).
  \item Therefore, under the assumptions in Theorem \ref{thm4.6} and Corollary \ref{cor4.7}, 
$\{r_n (\alpha u, t) \}_n$ consists of the $L^2(0,T)$ basis and 
there exists a readout $W_{\mathrm{out}}(\theta )$ satisfying (\ref{approx});\,\,
$\|y- y_{\mathrm{tar}} \| \approx 0$
\end{itemize}

\textbf{Error estimate.}
In the rest of this section, we consider the error estimate for (\ref{approx}).
Define the map $\eta (t) = \Omega t + \alpha \int^{t}_0 u(s)ds$ as in the proof of Corollary \ref{cor:main}.
Since it is $C^1$ and $\eta'(t) \neq 0$ for any $t\in \mathbb{R}$ and small $|\alpha |$, 
there exists the inverse $\eta^{-1}$.
Then, the number $T$ is defined by $2\pi = \eta (T)$.
The error is estimated as
\begin{align*}
& \| y^{(M)} - y_{\mathrm{tar}}\|^2_{L^2(0,T)} \\
&=\int^T_{0} |y^{(M)}(s) - y_{\mathrm{tar}}(s)|^2 ds \\
&= \int^{T}_{0}\left| \sum^M_{-M} w_nc_n \cdot e^{in \eta (s)} - y_{\mathrm{tar}}(s) \right|^2 ds \\
&= \int^{2\pi}_{0} \frac{\left| \sum^M_{-M} w_nc_n \cdot e^{in \tilde{s}} - y_{\mathrm{tar}}(\eta^{-1}(\tilde{s})) \right|^2}
{\Omega +\alpha u(\eta^{-1}(\tilde{s}))} d\tilde{s} \\
& \leq \frac{2\pi}{\Omega }\cdot 
\sup_{0<t<2\pi} \left| \sum^M_{-M} w_nc_n \cdot e^{in t} - y_{\mathrm{tar}}(\eta^{-1}(t)) \right|^2 \cdot (1 + O(\alpha )).
\end{align*}
Thus, the error is obtained by a uniform estimate of the convergence of the Fourier series
$|\sum^M_{-M} w_nc_n \cdot e^{in t} - y_{\mathrm{tar}}(\eta^{-1}(t))|$ 
and it depends on the regularity of $y_{\mathrm{tar}} \circ \eta^{-1}$. 
For example, if $y_{\mathrm{tar}} \circ \eta^{-1}$ is $C^p$ and its $p$-th derivative
is $\beta$-H\"{o}lder continuous, there exists $C>0$ such that
\[
|\sum^M_{-M} w_nc_n \cdot e^{in t} - y_{\mathrm{tar}}(\eta^{-1}(t))| < C \frac{\log M}{M^{p + \beta}}
\]
uniformly in $t \in (0,T)$.
In numerical simulation, $M$ may be the number of oscillators $N$ in the Kuramoto model.


\section{Prediction task} \label{sec:prediction}

In this section, we consider a prediction task when the input $u(t)$
is an almost periodic function.

\begin{definition}\label{def5.1}
A function $u(t)$ on $\mathbb{R}$ is called almost periodic if for any $\varepsilon >0$,
the set
\begin{equation}
T(u;\varepsilon ):= \{ \tau(\varepsilon ) \, ; \, |u(t+\tau) - u(t)| < \varepsilon ,\,\, \forall t\in \mathbb{R}\}
\end{equation}
is relatively dense
(that means $\exists L > 0$ such that $[a, a+L] \cap T(u; \varepsilon ) \neq \emptyset$ for $\forall a\in \mathbb{R}$).
\end{definition}
See Fink \cite{fink1974} for the theory of almost periodic functions.
A $T$-periodic function $u$ is almost periodic with 
$T(u; \varepsilon ) \supset \{ kT \, ; \, k\in \mathbb{Z}\}$ for any $\varepsilon >0$.

Our setting for the prediction task is;
\begin{itemize}
  \item Given data : \\
the input function $u(t)$ that is $C^1$, bounded and almost periodic,
\\
the target function defined by $y_{\mathrm{tar}}(t) = u(t + \Delta t)$ with a constant $\Delta t >0$.

  \item Hyper parameters in the Kuramoto reservoir : \\
the number of oscillators $N$ large,\\
$|\alpha |$ sufficiently small and $K \sim K_c$ so that Corollary \ref{cor4.7} holds, \\
$g(\omega )$ with the mean value $\Omega >0$ defined as follows.
\end{itemize}

Define the invertible map $\eta (t) = \Omega t + \alpha \int^t_{0} u(s)ds$.
Take positive $\tau_1(\varepsilon ) \in T(u; \varepsilon )$,
and define $\Omega >0$ so that $\eta (\tau_1) = 2\pi$.
With this $\Omega $, let us define $\tau_2 < \tau_3 < \cdots $ by $\eta (\tau_k) = 2\pi k, \,\, k\in \mathbb{N}$
($``<" $ follows from the fact that $\eta$ is monotonically increasing).

In what follows, we fix $\tau_k (\varepsilon )$ in this way for small $\varepsilon >0$
and assume that the conditions for Cor.\ref{cor4.7} are satisfied.
Define the output
\begin{equation}
y(t) = \sum^\infty_{n=-\infty} w_n c_n \cdot \mathrm{Exp}
\left[ in \left( \Omega t+ \alpha \int^t_{0} u(s)ds \right)\right]
\end{equation}
as before.
Due to Cor.\ref{cor4.7}, we can find the readout $\{ w_n\}_n$ so that
$||y- y_\mathrm{tar}||_{L^2(0, \tau_1)} = 0$ on the interval $(0, \tau_1)$, and let us fix this $\{ w_n\}_n$.
When $u(t)$ is $T$-periodic, it is easy to see that $y(t)$ is also $T$-periodic.
By putting $\tau_1 = T$, we can extend the interval to $(0, kT),\,\, k\in \mathbb{N}$ periodically.
Our purpose here is to extend the interval $(0, \tau_1)$ for the estimate of $y(t) - y_{\mathrm{tar}}(t)$ 
to the larger interval $(0, \tau_k)$ when $u$ is almost periodic.

Unfortunately, it is very difficult to obtain $L^1$ or $L^2$ estimate of $y(t) - y_{\mathrm{tar}}(t)$.
Indeed, $u(s+\tau) - u(s)$ is of $O(\varepsilon )$ for $\tau =\tau (\varepsilon ) \in T(u;\varepsilon )$,
however, $\int^\tau_0 | u(s+\tau) - u(s)| ds$ is not because $\tau (\varepsilon )$ may get larger as 
$\varepsilon $ decreases.
Thus, we consider the pointwise estimate.
Our main theorem is ;

\begin{thm}\label{thm5.2}
For the above setting of the prediction task, for any $\varepsilon >0,\, k\in \mathbb{N}$ and for $0<t<\tau_k$,
there exists a constant $E_k$ such that
\begin{equation}
|y(t) - y_{\mathrm{tar}}(t)| < E_k \varepsilon ^{1/4} \cdot (1 + O(\varepsilon^{1/2}) ),
\end{equation}
as $\varepsilon \to 0$.
\end{thm}

We will prove the theorem step by step with respect to $k \in \mathbb{N}$.
The triangle inequality provides
\begin{align*}
& |y(t+\tau_k) - y_{\mathrm{tar}}(t+\tau_k)| \\
& \quad \leq |y(t+ \tau_k)-y(t)|+|y_{\mathrm{tar}}(t+\tau_k)-y_{\mathrm{tar}}(t)|+|y(t)-y_{\mathrm{tar}}(t)|.
\end{align*}
Hence, the proof is reduced to the estimations of the three terms in the right hand side.
For this purpose, we need a few lemmas.
The next one is known fact about the primitive function of an almost periodic function,
see Fink \cite{fink1974} for the detail.
\begin{lem}\label{lem5.2}
If $u(t)$ is almost periodic,
\begin{itemize}
\item it is expressed as a formal Fourier series 
$u(t) \sim \sum_n a_n e^{i \lambda _n t},\,\, \lambda _n \in \mathbb{R}$;

\item if $\{ \lambda _n\}$ is bounded away from zero 
(i.e. $\exists M$ such that $|\lambda _n| \geq M > 0$ for any $n$),
the the primitive function $U(t) = \int u(s)ds$ exists and is also almost periodic;

\item $U(t)$ is almost periodic if and only if it is bounded on $\mathbb{R}$;

\item for any $\varepsilon >0,\,\, t\in \mathbb{R}$ and $\tau (\varepsilon ) \in T(u; \varepsilon )$, 
there is a number $\delta (\varepsilon ) = c\varepsilon \cdot (1 + O(\varepsilon )),\,\, c\neq 0$ such that 
$|U(t+\tau) - U(t)| < \delta (\varepsilon )$.
This implies $\tau (\varepsilon ) \in T(U; \delta )$ for some $\delta \sim O(\varepsilon )$.
\end{itemize}
\end{lem}

\begin{lem}\label{lem5.3}
Suppose that an almost periodic $C^1$ function $u(t)$ has the almost periodic primitive function $U(t)$.
Then, for any $k\in \mathbb{N}$, there is a constant $b_k>0$ such that
\begin{align*}
B_k(t, \varepsilon ) := |u(t+\tau_k) - u(t) | < b_k \varepsilon \cdot (1 + O(\varepsilon ))
\end{align*}
for any $t\in \mathbb{R}$ as $\varepsilon \to 0$.
Further, there is a number $F_k (\varepsilon ) \sim O(\varepsilon )$ such that
\begin{equation}
\tau_{k+1} - \tau_k = \tau_1 + F_k (\varepsilon ).
\end{equation}
\end{lem}

Since $u(t)$ is bounded, the lemma gives
\begin{equation}
|u(t+\tau_k) - u(t)| < \sup_t |B_k (t, \varepsilon )| =: \varepsilon _k \sim O(\varepsilon ), \quad k\in \mathbb{N},
\end{equation}
that implies $\tau_k \in T(u; \varepsilon _k)$.

\begin{proof}
We prove by induction.
The case $k=1$ is trivial because $\tau_1 \in T(u; \varepsilon )$.
Assume that Lemma is true for some $k$.
Recall $\tau_k$ is defined through
\begin{align*}
2\pi k = \Omega \tau_k + \alpha \int^{\tau_k}_{0} u(s)ds. 
\end{align*}
This gives
\begin{align*}
2\pi &= \Omega (\tau_{k+1} - \tau_k) + \alpha \int^{\tau_{k+1}}_{\tau_{k}} u(s)ds \\
&= \Omega (\tau_{k+1}-\tau_{k}) + \alpha \int^{\tau_{k+1}-\tau_{k}}_{0} u(s)ds
  + \alpha \int^{\tau_{k+1}-\tau_{k}}_0 \left( u(s + \tau_{k}) - u(s) \right) ds \\
&= \eta (\tau_{k+1}-\tau_{k}) + \alpha \int^{\tau_{k+1}-\tau_{k}}_0 \left( u(s + \tau_{k}) - u(s) \right) ds \\
&= \eta (\tau_{k+1}-\tau_{k}) + \alpha \left( U(\tau_{k+1})-U(\tau_{k+1}-\tau_{k}) \right)
 - \alpha \left( U(\tau_{k}) - U(0) \right),
\end{align*}
where $U$ is the primitive function of $u$.
By the inductive assumption, we have $\tau_k \in T(u; \varepsilon _k)$,
and by Lemma \ref{lem5.2}, there is a number $\delta _k (\varepsilon _k)$ such that
\begin{equation}
|U(t+ \tau_k) - U(t)| < \delta _k(\varepsilon _k) \sim O(\varepsilon _k) \sim O(\varepsilon ).
\end{equation}
Thus, we can write $\eta (\tau_{k+1} - \tau_{k}) = 2\pi + O(\varepsilon _k)$.
Since $\eta$ is $C^1$ invertible, this yields
\begin{equation}
\tau_{k+1}-\tau_{k} = \eta^{-1} (2\pi + O(\varepsilon _k)) = \tau_1 + F_k, 
\quad F_k \sim O(\varepsilon _k) \sim O(\varepsilon ).
\end{equation}
Thus,
\begin{align*}
B_{k+1}(t, \varepsilon ) &= |u(t + \tau_{k+1}) - u(t) | \\
& < | u(t+\tau_{k}+F_k+\tau_1) - u(t+\tau_k + F_k) | \\
& \qquad + |u(t+ \tau_k + F_k) - u(t+\tau_k) | + | u(t+\tau_k) - u(t) | \\
&= B_1 + |u(t+ \tau_k + F_k) - u(t+\tau_k)| + B_k.
\end{align*} 
Since $u$ is $C^1$, we have $u(t+ \tau_k + F_k) - u(t+\tau_k) \sim O(F_k) \sim O(\varepsilon )$,
that completes the proof.
\end{proof}

\begin{lem}\label{lem5.4}
For any $k\in \mathbb{N}$, there is a constant $a_k>0$ such that
\begin{equation}
A_k(t, \varepsilon ) := |y(t+\tau_k) - y(t)| < a_k \varepsilon ^{1/4} \cdot (1 + O(\varepsilon^{1/2}) ), 
\end{equation}
for any $t\in \mathbb{R}$ as $\varepsilon \to 0$.
\end{lem}

\begin{proof}
By the definition of $y$ and $\tau_k$,
\begin{align*}
y(t+\tau_k) &= \sum w_nc_n \cdot \mathrm{Exp}\left[ in \left(\Omega \tau_k+\alpha \int^{\tau_k}_{0}u(s)ds \right)
   + in \left(\Omega t +\alpha \int^{t+\tau_k}_{\tau_k}u(s) ds \right) \right] \\
&= \sum w_nc_n \cdot \mathrm{Exp}\left[ in \left(\Omega t +\alpha \int^{t}_{0}u(s+\tau_k ) ds \right) \right],
\end{align*} 
and 
\begin{align}
y(t+\tau_k) - y(t) 
&=  \sum w_nc_n \cdot \mathrm{Exp}\left[ in \left(\Omega t +\alpha \int^{t}_{0}u(s) ds \right) \right] \nonumber \\
& \qquad \times \left(  \mathrm{Exp}\left[in \alpha \int^{t}_{0}( u(s+\tau_k )-u(s)) ds \right] - 1\right) .
\label{eq5.9}
\end{align}
Put $\int^t_0 ( u(s+\tau_k )-u(s)) ds = \delta _k \sim O(\varepsilon )$ as in the proof of Lemma \ref{lem5.3}
and write $\delta _k = \delta $ for simplicity.
Then, we have
\begin{align*}
A_k(t, \varepsilon ) ^2 = |y(t+\tau_k)-y(t)|^2 
 \leq \sum^\infty_{-\infty} |w_n|^2 \cdot \sum^\infty_{-\infty} |c_n|^2 |e^{in\alpha \delta }-1|^2.
\end{align*}
Since $W(\theta ) \in L^2 (0,2\pi)$, $\sum |w_n|^2$ is bounded.

From the proof of Theorem \ref{thm4.6} (in particular from Eqs.(\ref{an2}) and (\ref{bn2})),
we can verify that there is $D_1>0$ such that $|c_n| < D_1/|n|,\,\, n\neq 0$.
Hence, for large $M \in \mathbb{N}$, the second summation is estimated as
\begin{align}
\sum^\infty_{-\infty} |c_n|^2 |e^{in\alpha \delta }-1|^2 
& < D_1^2 \sum_{n\neq 0} \frac{1}{n^2}(2 - e^{in\alpha \delta } - e^{-in\alpha \delta }) \nonumber \\
&< 2D_1^2 \sum^M_{n=1} (2 - e^{in\alpha \delta } - e^{-in\alpha \delta }) + 8D_1^2\sum^\infty_{n=M+1} \frac{1}{n^2}.
\label{D1}
\end{align}
Since
\begin{align*}
\sum^\infty_{n=M+1} \frac{1}{n^2} < \int^\infty_{M} \frac{dx}{x^2} = \frac{1}{M}, 
\end{align*}
putting $M \geq \varepsilon ^{-\gamma}$ for some $\gamma >0$ gives $\sum^\infty_{n=M+1}1/n^2 < \varepsilon ^\gamma $.
For this $M$, by a Taylor expansion around $\alpha \delta =0$, we can show
\begin{align*}
\sum^M_{n=1} e^{in\alpha \delta } = \frac{e^{i\alpha \delta } (1- e^{iM\alpha \delta })}{1-e^{i\alpha \delta }}
 = M \left( 1 + \sum^\infty_{k=1} P_k(M) (\alpha \delta )^k \right),
\end{align*}
where $P_k(M)$ is a certain polynomial of $M$ with degree $k$.
Hence, we obtain
\begin{align*}
2D_1^2 \sum^M_{n=1} (2 - e^{in\alpha \delta } - e^{-in\alpha \delta })
 &= -2D_1^2M \cdot \sum^\infty_{m=1} (P_{2m}(M)+ \overline{P_{2m}(M)}) \cdot (\alpha \delta )^{2m}.
\end{align*}
Here, the summation runs only for $k=2m$ because we can verify that $P_k(M) + \overline{P_k(M)} = 0$
for odd $k$.
With the leading coefficient $p_{2m} \neq 0$ of $P_{2m} + \overline{P_{2m}}$,
it provides 
\begin{align*}
& 2D_1^2 \sum^M_{n=1} (2 - e^{in\alpha \delta } - e^{-in\alpha \delta })
= -2D_1^2M \sum^\infty_{m=1}(p_{2m} + O(1/M))\cdot (M\alpha \delta )^{2m}  \\
 & \qquad = -2D_1^2M^3\alpha ^2\delta ^2 \sum^\infty_{m=1} (p_{2m} + O(1/M)) (M^2\alpha ^2\delta ^2)^{m-1}.
\end{align*}
Take $ M = \varepsilon ^{-\gamma }$ (more precisely, the closest integer to it) and 
recall that $\delta \sim c\varepsilon (1+O(\varepsilon ))$ for some constant $c$ (Lemma \ref{lem5.2}).
This yields 
\begin{align*}
M^3\alpha ^2\delta ^2\sim c^2 \alpha ^2 \varepsilon ^{2-3\gamma } (1+O(\varepsilon )),
 \quad M^2\alpha ^2\delta ^2\sim c^2 \alpha ^2 \varepsilon ^{2-2\gamma } (1+O(\varepsilon )).
\end{align*}
Comparing with the previous estimate $\sum^\infty_{n=M+1}1/n^2 < \varepsilon ^\gamma $, 
it turns out that two terms in (\ref{D1}) has the same order when $\gamma =1/2$, that results in
\begin{align*}
& 2D_1^2 \sum^M_{n=1} (2 - e^{in\alpha \delta } - e^{-in\alpha \delta }) \\
&= -2D_1^2c^2\alpha ^2 \varepsilon ^{1/2} (1+O(\varepsilon ))
   \sum^\infty_{m=1} (p_{2m} + O(\varepsilon ^{1/2})) (c^2 \alpha ^2\varepsilon (1+O(\varepsilon )))^{m-1} \\
 & = D_2 \varepsilon ^{1/2} \cdot (1 + O(\varepsilon^{1/2} ))
\end{align*}
with some constant $D_2$.
Hence, 
\begin{align*}
A_k(t, \varepsilon ) ^2 
 & < \sum^\infty_{-\infty}|w_n|^2 \cdot 
     ( 8D_1^2 \varepsilon ^{1/2} + D_2 \varepsilon ^{1/2} \cdot (1 + O(\varepsilon^{1/2} )) ), \\
A_k (t, \varepsilon ) 
  & < a_k \varepsilon ^{1/4} \cdot ( 1 + O(\varepsilon^{1/2} ) ) 
\end{align*}
for a constant $a_k$.
\end{proof}

Now we are in a position to prove the main theorem in this section.

\begin{proof}[Proof of Theorem \ref{thm5.2}]
The previous lemmas and $y_{\mathrm{tar}} (t) = u(t+\Delta t)$ give
\begin{alignat}{3}
& |y(t+\tau_k) - y_{\mathrm{tar}}(t+\tau_k)| \nonumber \\
& \quad \leq \quad |y(t+ \tau_k)-y(t)| &+ & \,\, |y_{\mathrm{tar}}(t+\tau_k)-y_{\mathrm{tar}}(t)|
    \quad & + & \,\, |y(t)-y_{\mathrm{tar}}(t)| \nonumber\\
& \quad =   \qquad A_k(t, \varepsilon )  &+ &  \qquad B_k(t+\Delta t , \varepsilon ) &+ & \,\, |y(t)-y_{\mathrm{tar}}(t)| \nonumber \\
& \quad \sim \,\, a_k \varepsilon ^{1/4} (1 + O(\varepsilon^{1/2}) ) \quad
   &+ & \qquad b_k \varepsilon (1 + O(\varepsilon) ) &+ & \,\, |y(t)-y_{\mathrm{tar}}(t)|.
\label{5.10}
\end{alignat}

\noindent \textbf{(i)} Suppose $0<t<\tau_1$. 
We know that $|| y^{(M)} - y_{\mathrm{tar}}||_{L^2(0, \tau_1)} \to 0$ as $M\to \infty$,
where $y^{(M)}$ is the partial sum (see (\ref{approx})).
Since $u(t)$ is $C^1$, the convergence is uniform on any open interval in $0<t<\tau_1$ and 
$|y(t)-y_{\mathrm{tar}}(t)| = 0$.
Let us investigate the behavior of $y(t)$ at $t=\tau_1$.
If $y_{\mathrm{tar}}(0) \neq y_{\mathrm{tar}}(\tau_1)$, $y^{(M)}(\tau_1)$ converges to
$(y_{\mathrm{tar}}(0) + y_{\mathrm{tar}}(\tau_1))/2 $ as $M\to \infty$ (Gibbs phenomenon).
However, since $y_{\mathrm{tar}}$ is almost periodic,
\begin{align*}
\lim_{t \to \tau_1 - 0}y(t) = \frac{1}{2}(2y_{\mathrm{tar}}(\tau_1) + O(\varepsilon ) )
 = y_\mathrm{tar} (\tau_1) + O(\varepsilon ),
\end{align*}
and we have the same estimate at $t = 0$.
\vskip.5\baselineskip
\noindent \textbf{(ii)} 
Next, let us consider (\ref{5.10}) for $k=1$ and $0<t< \tau_1$;
\begin{align*}
 |y(t+\tau_1) - y_{\mathrm{tar}}(t+\tau_1)| \sim O(\varepsilon ^{1/4}) + |y(t)-y_{\mathrm{tar}}(t)|.
\end{align*}
Combined with the result of (i), this shows 
$|y(t) - y_\mathrm{tar} (t)| \sim O(\varepsilon ^{1/4})$ for $\tau_1 < t < 2\tau_1$.
For the behavior of $y(\tau_1)$ from the right, we have
\begin{align*}
\lim_{t\to 0+0} |y(t+\tau_1) - y_\mathrm{tar}(t+\tau_1)| 
  \sim O(\varepsilon ^{1/4}) + \lim_{t\to 0+0}|y(t) - y_\mathrm{tar}(t)| \sim O(\varepsilon ^{1/4}).
\end{align*}
This means that even if $y(t)$ is discontinuous at $t = \tau_1$, 
the gap $|y(\tau_1 - 0) - y(\tau_1 + 0)|$ is of $O(\varepsilon ^{1/4})$.
Therefore, $|y(t) - y_\mathrm{tar} (t)| \sim O(\varepsilon ^{1/4})$ for $0<t<2 \tau_1$.
Then, we use (\ref{5.10}) again for $k=1$ and $0<t< 2\tau_1$ to
obtain $|y(t) - y_\mathrm{tar} (t)| \sim O(\varepsilon ^{1/4})$ for $\tau_1 <t<3 \tau_1$.
Since $3\tau_1 = 2\tau_1 + \tau_1 = \tau_2 + \tau_1 + O(\varepsilon ) > \tau_2$ due to Lemma \ref{lem5.3}, 
we have $|y(t) - y_\mathrm{tar} (t)| \sim O(\varepsilon ^{1/4})$ for $0 <t< \tau_2$.
A possible discontinuity of $y(t)$ at $t = 2\tau_1$ does not change the estimate as above.

\vskip.5\baselineskip
\noindent \textbf{(iii)} 
Let us consider (\ref{5.10}) for $k=2$ and $0<t< \tau_2$;
\begin{align*}
 |y(t+\tau_2) - y_{\mathrm{tar}}(t+\tau_2)| \sim O(\varepsilon ^{1/4}) + |y(t)-y_{\mathrm{tar}}(t)|.
\end{align*}
This shows $|y(t) - y_\mathrm{tar} (t)| \sim O(\varepsilon ^{1/4})$ for $0< t < 2\tau_2$.
Using (\ref{5.10}) again for $k=2$ and $0 <t< 2 \tau_2$ yields 
$|y(t) - y_\mathrm{tar} (t)| \sim O(\varepsilon ^{1/4})$ for $\tau_2 < t < 3\tau_2$.
Hence, the estimate holds for the interval to $0<t<\tau_3$ because we can verify that $\tau_3 < 3 \tau_2$ 
with the aid of $\tau_3 = \tau_1 + \tau_2 + O(\varepsilon )$ as above.

Repeating this procedure, it turns out that there exists a number $E_k $
such that $|y(t) - y_{\mathrm{tar}}(t)| \sim 
E_k \varepsilon ^{1/4} (1 + O(\varepsilon^{1/2}) )$ for $0<t<\tau_k$.
\end{proof}


\section{Autonomous run for prediction tasks} \label{sec:autonomous}

So far we have considered the input-aware model in the sense that the Kuramoto reservoir always 
receives the information of the input $u(t)$ through (\ref{KM_input}) and gives the output $y(t)$.
In this section, we investigate the autonomous-run system for prediction tasks;
after the training time to determine the readout $\{ w_n\}_n$, the input is replaced by the output
one step-time before.
The setting is the same as the previous section, 
although $u(t)$ is assumed to be a periodic function for simplicity.
\begin{itemize}
  \item Given data : \\
the input function $u(t)$ that is continuous, bounded and periodic with the period $T$,
\\
the target function defined by $y_{\mathrm{tar}}(t) = u(t + \Delta t)$ with a constant $\Delta t >0$.

  \item Hyper parameters in the Kuramoto reservoir : \\
the number of oscillators $N$ large,\\
$|\alpha |$ sufficiently small and $K \sim K_c$ so that Corollary \ref{cor4.7} holds, \\
$g(\omega )$ with the mean value $\Omega >0$ defined by
\begin{align*}
\Omega = \frac{1}{T} \left( 2\pi + \alpha \int^T_{0} u(s)ds \right).
\end{align*}
\end{itemize}
Recall the output is defined by
\begin{equation}
y(t) = \sum^\infty_{n=-\infty} w_n c_n \cdot \mathrm{Exp}
\left[ in \left( \Omega t+ \alpha \int^t_{0} u(s)ds \right)\right].
\end{equation}
By using the above $\Omega $, it is easy to see that $y(t)$ is also $T$-periodic.
By Corollary \ref{cor4.7}, we can define the readout $\{ w_n\}_n$ so that 
$||y-y_\mathrm{tar}||_{L^2(0,T)} = 0$, and we can extend it to the interval $(0, kT)$
for any $k \in \mathbb{N}$ periodically (note that we do not need $\tau_2, \tau_3,\cdots $
defined for the almost periodic case).
Let $T_{\mathrm{tr}} \geq T$ be a training time to determine $\{ w_n\}_n$ during $0<t<T_{\mathrm{tr}}$.
After that ($t> T_{\mathrm{tr}}$), the input $u(t)$ is replaced by the new input (feedback) $\overline{u}(t)$
which is defined by using the $1$-step before of the new output $\overline{y}(t)$ as follows.

\begin{definition}\label{def6.1}
The new input $\overline{u}$ and the new output $\overline{y}$ are defined by
\begin{equation}
\overline{y}(t) = \sum^\infty_{n=-\infty} w_n c_n \cdot \mathrm{Exp}
\left[ in \left( \Omega t+ \alpha \int^t_{0} \overline{u}(s)ds \right)\right],
\end{equation}
and
\begin{equation}
\overline{u}(t) = \left\{ \begin{array}{ll}
u(t) & (0<t<T_{\mathrm{tr}})  \\
\overline{y}(t-\Delta t) & (T_{\mathrm{tr}}<t), \\
\end{array} \right.
\end{equation}
respectively, where $\Delta t$ is the same number as given in the above setting.
\end{definition}
This means that for $T_{\mathrm{tr}}<t$, the output is fed back to the reservoir with the delay 
$\Delta t$ instead of $u(t)$.
Now we replace $u(t)$ in the Kuramoto reservoir (\ref{KM_input}) by $\overline{u}(t)$ and show that 
it gives the same result as the input-aware model.
\begin{thm}\label{thm6.2}
For the autonomous run model above, $y(t) = \overline{y}(t)$ on the bounded interval $(0, kT)$
and $||\overline{y} - y_\mathrm{tar}||_{L^2(0, kT)} = 0$ hold 
for any $k\in \mathbb{N}$. 
\end{thm}

\begin{proof}
Since 
\begin{align*}
\int^t_{0} \overline{u}(s)ds = \left\{ \begin{array}{ll}
\displaystyle \int^t_{0}u(s)ds  & (0<t<T_{\mathrm{tr}}), \\
\displaystyle \int^{T_{\mathrm{tr}}}_{0}u(s)ds 
+ \int^t_{T_{\mathrm{tr}}} \overline{y}(s-\Delta t)ds  & (T_{\mathrm{tr}} < t),  \\
\end{array} \right.
\end{align*}
the new output is rewritten as
\begin{align*}
\overline{y}(t) = \left\{ \begin{array}{ll}
y(t) & (0<t<T_{\mathrm{tr}}), \\
\displaystyle \sum^\infty_{-\infty}w_nc_n e^A & (T_{\mathrm{tr}}<t), \\
\end{array} \right.
\end{align*}
where
\begin{equation}
A:= in \left( \Omega t + \alpha \int^{T_\mathrm{tr}}_{0} u(s)ds
  + \alpha \int^t_{T_{\mathrm{tr}}} \overline{y}(s-\Delta t) ds \right).
\end{equation}
Suppose $T_{\mathrm{tr}}<t<T_{\mathrm{tr}} + \Delta t$.
Since $y(s)=\overline{y}(s)$ when $T_{\mathrm{tr}}-\Delta t < s < T_{\mathrm{tr}}$, we have
\begin{align*}
A &= in \left( \Omega t + \alpha \int^{T_\mathrm{tr}}_{0} u(s)ds
  + \alpha \int^{t-\Delta t}_{T_{\mathrm{tr}}-\Delta t} \overline{y}(s) ds \right) \\
&=  in \left( \Omega t + \alpha \int^{T_\mathrm{tr}}_{0} u(s)ds
  + \alpha \int^{t-\Delta t}_{T_{\mathrm{tr}}-\Delta t} y(s) ds \right).
\end{align*}
Since $y_{\mathrm{tar}} (s) = u(s+\Delta t)$, further it is written as
\begin{align*}
A &= in \left( \Omega t + \alpha \int^{T_\mathrm{tr}}_{0} u(s)ds
  + \alpha \int^{t-\Delta t}_{T_{\mathrm{tr}}-\Delta t} 
    \left( u(s+\Delta t) - y_{\mathrm{tar}}(s) + y(s) \right) ds \right) \\
&= in \left( \Omega t + \alpha \int^{t}_{0} u(s)ds
  + \alpha \int^{t-\Delta t}_{T_{\mathrm{tr}}-\Delta t} 
    \left( y(s) - y_{\mathrm{tar}}(s) \right) ds \right).
\end{align*}
Hence, we obtain
\begin{align}
\overline{y}(t) &= \sum^\infty_{-\infty}w_nc_n 
  \mathrm{Exp}\left[ in \left( \Omega t + \alpha \int^{t}_{0} u(s)ds \right)  \right] \nonumber \\
& \qquad\qquad \times \mathrm{Exp} \left[ in\alpha \int^{t-\Delta t}_{T_{\mathrm{tr}}-\Delta t} 
    \left( y(s) - y_{\mathrm{tar}}(s) \right) ds \right]
\label{ybar}
\end{align}
It is known that there exists $D>0$ such that $||f||_{L^1 (U)}< D||f||_{L^2 (U)}$
for any $f\in L^2 (U)$, where $U \subset \mathbb{R}^m$ is bounded and open ($L^p$-$L^q$ estimate).
Since we know that $||y - y_\mathrm{tar}||_{L^2 (0, kT)} = 0$, the second 
$\mathrm{Exp} \left[ \cdots  \right]$ disappears and we obtain $\overline{y}(t) = y(t)$ on $T_{\mathrm{tr}}<t<T_{\mathrm{tr}}+\Delta t$.

Suppose $T_{\mathrm{tr}} + \Delta t <t <T_{\mathrm{tr}} + 2\Delta t$.
Since $y(s)=\overline{y}(s)$ when $T_{\mathrm{tr}}-\Delta t < s < T_{\mathrm{tr}} + \Delta t$, we have
\begin{align*}
A &=  in \left( \Omega t + \alpha \int^{T_\mathrm{tr}}_{0} u(s)ds
  + \alpha \int^{t-\Delta t}_{T_{\mathrm{tr}}-\Delta t} y(s) ds \right).
\end{align*}
By the same way as above, we obtain
$\overline{y}(t) = y(t)$ on $T_{\mathrm{tr}}<t<T_{\mathrm{tr}}+2 \Delta t$.
Repeating this procedure proves $\overline{y}(t) = y(t)$ on the bounded interval $(0, kT)$.
\end{proof}

\textbf{Error estimate.}
In numerical simulation, the only partial sum is available.
Let
\begin{equation}
y^{(N)}(t) = \sum^N_{n=-N} w_n c_n \cdot \mathrm{Exp}
\left[ in \left( \Omega t+ \alpha \int^t_{0} u(s)ds \right)\right]
\end{equation}
be the partial sum of $y(t)$ and similar for $\overline{y}^{(N)}$.
By the same calculation to obtain (\ref{ybar}), it turns out that they satisfy
\begin{align*}
y^{(N)}(t) - \overline{y}^{(N)}(t) &= \sum^N_{-N}w_nc_n 
  \mathrm{Exp}\left[ in \left( \Omega t + \alpha \int^{t}_{0} u(s)ds \right)  \right] \nonumber \\
& \qquad\qquad \times \left( 1 - \mathrm{Exp} \left[ in\alpha \int^{t-\Delta t}_{T_{\mathrm{tr}}-\Delta t} 
    \left( y^{(N)}(s) - y_{\mathrm{tar}}(s) \right) ds \right) \right].
\end{align*}
Now we are in the same situation as (\ref{eq5.9}) in the proof of Lemma \ref{lem5.4}.
\\
Write $\int^{t-\Delta t}_{T_{\mathrm{tr}}-\Delta t} \left( y^{(N)}(s) - y_{\mathrm{tar}}(s) \right) ds
 = H(N)$, where $H(N) \to 0$ as $N\to \infty$, and the decay rate depends on the regularity
of $y_\mathrm{tar}(t) = u(t+\Delta t)$ as was discussed in the end of Section \ref{sec:4}.
Then, we can conclude that the error is $|y^{(N)}(t) - \overline{y}^{(N)}(t)| \sim H(N)^{1/4}$
on the interval $(0, kT)$.
Note that it may grow as $k$ increases because of the step by step procedure in the above proof.

\section{Experiments}\label{sec:5}
Based on the above argument, we demonstrated the performance around the edge of bifurcation 
in the Kuramoto reservoir computing model using numerical simulation. 
A prediction task was used to evaluate the performance.


Since the $n$-th order parameter $r_n(\alpha u;t)$ can be numerically computed, 
we can obtain an approximate vector $\hat{w}_{\mathrm{out}}$ of $\{w_n\}_{n=0}^\infty$
in the following steps: 
it is an implementable model by the following procedure based on  \eqref{key_representation}.

\medskip 

\noindent\hrulefill

\noindent{\bf Algorithm.} (Kuramoto reservoir computing) \vspace{-2mm}

\noindent\hrulefill

\noindent{\bf Require:} Data set (input/target) $\{u(t_i), y_{\mathrm{tar}}(t_i)\}_{i=1}^T$ and hyperparameter $N \in \mathbb N$.
\begin{enumerate}
\item For training time $t=t_1,t_2,\ldots, t_T$, solve 
\[
\begin{split}
(y_{\mathrm{tar}}(t_1), \cdots, y_{\mathrm{tar}}(t_T))
& =(w_0, w_1,\cdots, w_{2N}) \times \\
&
\begin{pmatrix}
   r_0(\alpha u;t_1) & r_0 (\alpha u;t_2) & \cdots & r_0 (\alpha u;t_T) \\
   \mathrm{Re}(r_1(\alpha u;t_1) )& \mathrm{Re}(r_1 (\alpha u;t_2)) & \cdots & \mathrm{Re}(r_1 (\alpha u;t_T)) \\
      \mathrm{Im}(r_1(\alpha u;t_1) )& \mathrm{Im}(r_1 (\alpha u;t_2)) & \cdots & \mathrm{Im}(r_1 (\alpha u;t_T)) \\
   \vdots & & & \vdots\\
      \mathrm{Re}(r_N(\alpha u;t_1) )& \mathrm{Re}(r_N (\alpha u;t_2)) & \cdots & \mathrm{Re}(r_N (\alpha u;t_T)) \\
   \mathrm{Im}(r_N(\alpha u;t_1)) & \mathrm{Im}(r_N (\alpha u;t_2)) & \cdots & \mathrm{Im}(r_N (\alpha u;t_T)) \\
\end{pmatrix}\\
& =: \hat{w}_{\mathrm{out}} R.
\end{split}
\]
(note that $r_0 = 1$ by the definition).
\item Calculate $\hat{w}_{\mathrm{out}} = (y_{\mathrm{tar}}(t_1), \cdots, y_{\mathrm{tar}}(t_T)) R^{-1}$ (Moore-Penrose inverse).

\item {\bf Return} $y(t) = \hat{w}_{\mathrm{out}} 
(r_0(\alpha u;t), \mathrm{Re}(r_1(\alpha u;t)), \mathrm{Im}(r_1(\alpha u;t)), \ldots, \mathrm{Im}(r_N(\alpha u;t)))^\top$.
\end{enumerate} \vspace{-2mm}
\noindent\hrulefill

\medskip

As above, we can indeed obtain the approximation $\hat{w}_{\mathrm{out}}$ of $\{w_n\}_{n=0}^\infty$ (i.e. the real Fourier series of 
an integral kernel $W_{\mathrm{out}}(\theta)$ in \eqref{KRC})
and use our reservoir computing for tasks such as time series prediction.\\ 

Fig. \ref{fig:3} (a) shows the composition of the Kuramoto reservoir computing system given in \eqref{KRC}. 
$N$ oscillators receive the input $\alpha u(t)$ through \eqref{KM_input}, 
and $y(t)$ is the output by the product of $\hat{w}_{\mathrm{out}}$ and $\{r_n(\alpha u; t)\}$. In the following tasks, the $\hat{w}_{\mathrm{out}}$ was trained to predict $y_{\mathrm{tar}}(t) =u(t+\Delta t)$ in the training period. 
The training period was set at 30 s, and the test period for the output $y(t)$ was set at 30 s. 
These training and tests were performed after 240 s to stabilize the order parameters. 

\subsection{Relaxation oscillator}

In this example, $u(t)$ was derived from the Van der Pol oscillator represented as 
\begin{equation}
\frac{d^2u}{dt^2} - \mu (1 - u^2) \frac{du}{dt} + u = 0,
\label{vanderpol}
\end{equation}
and $\Delta t$ matched the simulation time step ($0.01\,\mathrm{s}$).
The values of these parameters are summarized in Table 1 below.

\begin{table}[h]
\begin{center}
\caption{Parameter settings.}
\begin{tabular}{ |p{3cm}|p{8cm}|p{3cm}|}
 \hline
 Parameter& Description &Value\\
 \hline
 $N$   & the number of oscillators    &500\\
 $\Delta t$  &   delay  & 0.01 s\\
 $\alpha$ & coefficient of the input & 0.01\\
 $u(t)$    & input signal & Eq.\eqref{vanderpol}\\
 $y_{\mathrm{tar}}(t)$ & target signal & $u(t+\Delta t)$\\
 \hline
\end{tabular}
\end{center}
\end{table}

\begin{figure}[t]
\begin{center}
\includegraphics[width=140mm]{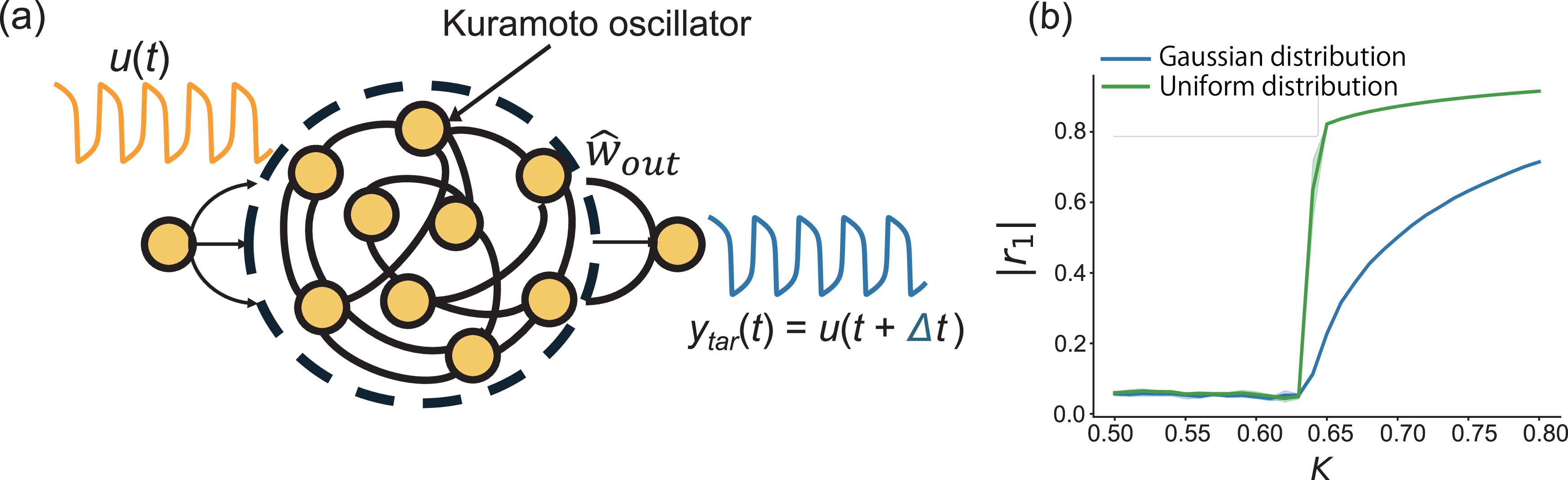}
\vspace*{0.0cm}
\caption{
The Kuramoto reservoir computing system. (a) Schematic diagram of the Kuramoto reservoir computing model and the prediction task. 
(b) The relationship of the order parameter and the bifurcation with coupling strength $K$ 
when $g(\omega)$ is Gaussian distribution or uniform distribution in the numerical simulation. 
The graphs indicate the mean of 10 trials; the shade indicates the 95\% confidence interval.} \label{fig:3} 
\end{center}
\end{figure}

The $\omega_i$ of each oscillator in the Kuramoto model was obtained from the distribution $g(\omega)$.
We simulated two cases;
the Gaussian distribution with mean $\Omega = 0.5$ and standard deviation 0.4, 
or uniform distribution on the interval $[0,1]$ (i.e. the mean value was also $\Omega = 0.5$). 
When the parameters were set in this way, the bifurcation points were 
$K_{\mathrm{c}} \sim 0.638$ (Gaussian distribution), $0.637$ (uniform distribution), 
respectively (Fig. \ref{fig:3} (b), see also Fig.~\ref{fig:2}). 
The reservoir computing performance was quantified by the mean squared error represented as
\begin{equation}
\frac{1}{T'} \sum_{i=1}^{T'} \|y_{\mathrm{tar}}(t_i) - y(t_i) \|^2,
\label{mse}
\end{equation}
where $T'$ is the number of the test step.
As a result, the Kuramoto reservoir computing could predict the Van der Pol oscillator one step after the input 
when $K_c<K$ (Fig. \ref{fig:4} (a)). 
The mean squared error was large when $K<K_c$, and the oscillation was not well predicted, 
however, it became smaller when $K$ exceeded $K_c$ (Fig. \ref{fig:4} (b)). 
On the other hand, if $\Omega = 0$, the mean squared error remained large even when $K_c<K$. 

At $K\approx K_c$, the mean squared error of the uniform distribution is smaller than that of the Gaussian.
This is because, for the Gaussian, the order parameter continuously changes at $K=K_c$, 
while for the uniform distribution, it suddenly jumps to the value $r_1 = \pi/4$ (see Fig.~\ref{fig:2} and Fig.~\ref{fig:3} (b)). 

These results suggest that the Kuramoto reservoir can approximate the target function $y_{\mathrm{tar}} \in L^2(0,T)$ right after the bifurcation, which supports our mathematical results and the edge of bifurcation in Section \ref{sec:4}.

\begin{figure}[h]
\begin{center}
\includegraphics[width=140mm]{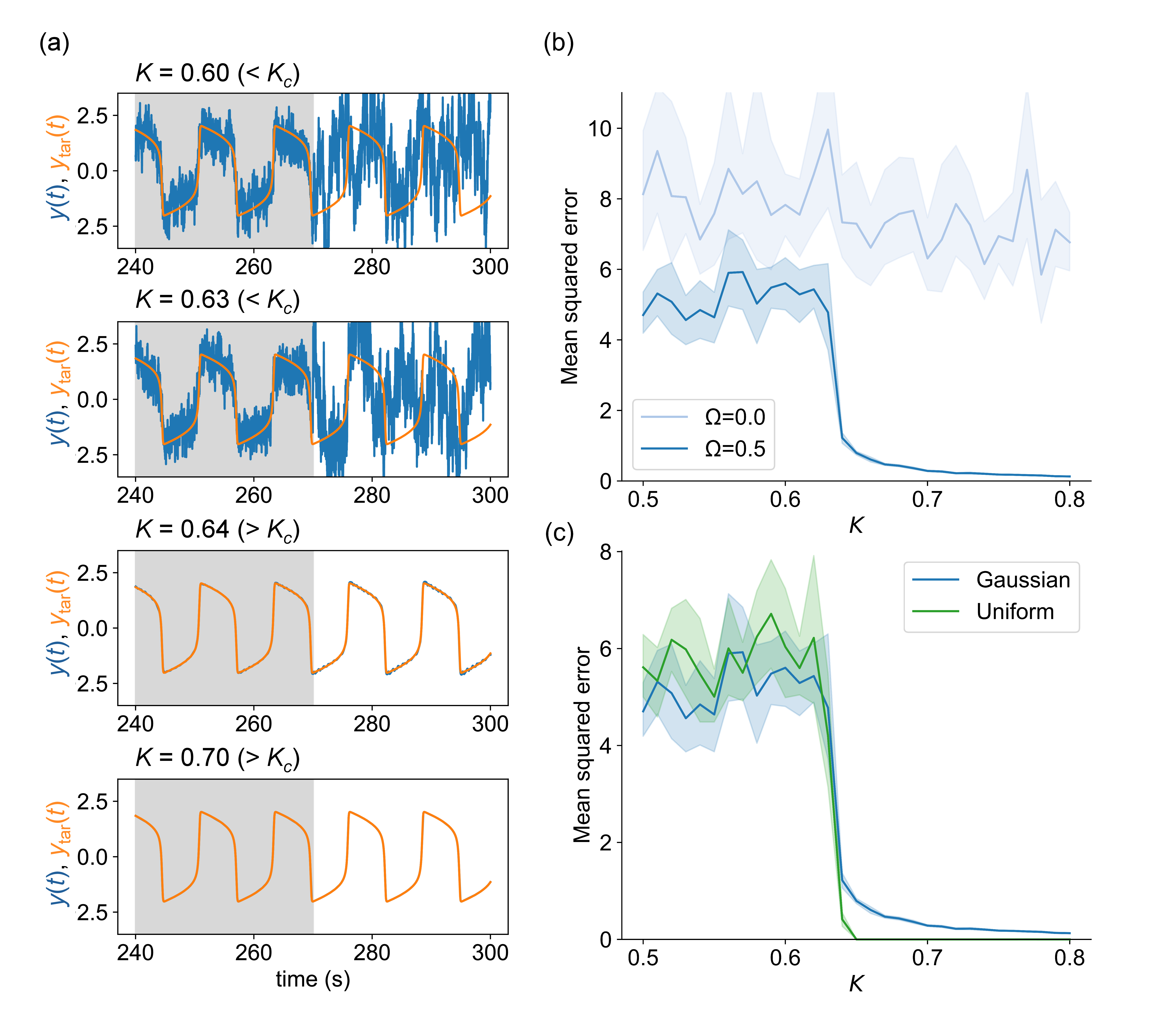}
\caption{
Prediction task using the Van der Pol oscillator. 
(a) The output $y(t)$ (blue line) and the target function $y_{\mathrm{tar}}(t)$ (orange line) when $\Omega=0.5$ with $K=0.60$ ($<K_c$), 
$K=0.63$ ($<K_c$), $K=0.64$ ($>K_c$) and $K=0.70$ ($>K_c$), for which 
we used the Gaussian distribution, and the bifurcation point is $K_c \sim 0.638$. 
The $y_{\mathrm{tar}}(t)$ is Eq.\eqref{vanderpol}, with $\mu \sim 5.66$.
The training period is shown in shading. 
(b) The relationship between the mean squared error and the coupling strength $K$ when $\Omega =0$ and 0.5. 
The solid line represents the mean of 10 trials; the shaded area indicates the 95\% confidence interval. 
(c) Comparison of Gaussian and uniform distribution of $g(\omega)$, with setting $\Omega$ to 0.5.} \label{fig:4} 
\end{center}
\end{figure}

\subsection{Almost periodic function}

The $u(t)$ used here is represented as
\begin{equation}
u(t) = \sin(t) + \sin(\sqrt2 t).
\label{almost}
\end{equation}
This function is almost periodic, since the ratio between the two frequencies $2$ and $\sqrt{2}$ is 
irrational and thus $u(t)$ never repeats exactly.
To find a suitable quasi-period and $\Omega $, suppose that there are integers $m$ and $m'$
such that $2\pi m \approx (2\pi m'/\sqrt{2}) \Rightarrow \sqrt{2}m \approx m'$.
By approximating $\sqrt{2} \approx 7/5$, it is reasonable to set $m=5$ and $m'=7$.
One can regard $u(t)$ as a quasi-periodic signal with an approximate period $\tau \approx 10\pi$ 
for practical analysis. 
Then, the relation $2\pi = \Omega \tau + \alpha \int^\tau_0 u(s)ds$ gives $\Omega \approx 0.2$ 
when $|\alpha |$ is small.
The training and test durations were both set to 32 s, which sufficiently cover the pseudo-period of the signal.

In the prediction task using $\Delta t = 0.01$ s, the Kuramoto reservoir exhibits noisy and incoherent output before bifurcation (Fig.~\ref{fig:5}(a)). In contrast, after the bifurcation, it produces an output that follows the target signal. A sharp drop in the prediction error is observed at the bifurcation point (Fig.~\ref{fig:5}(b)), supporting the theoretical argument in Section~\ref{sec:prediction}.

Interestingly, performance remained stable for a wide range of delays ($\Delta t = 0.1, 1, 10$, and $100$ s) after bifurcation (Fig.~\ref{fig:6}).
In the context of reservoir computing, such robustness to long delays is often discussed in terms of memory capacity.
This property is consistent with recent reports that oscillator-based reservoirs can exhibit 
high memory performance \cite{Kawai2025, Jong2025}.

Finally, we examined the autonomous run, in which the external input was replaced by the reservoir' s own output after the training phase.
Under this closed-loop condition, the reservoir generated a self-sustained oscillation that closely followed the target quasi-periodic trajectory and exhibited a small prediction error after the bifurcation (Fig.~\ref{fig:7}).
This result is consistent with the input-aware model and supports the mathematical theory described in Section~\ref{sec:autonomous}.

\begin{figure}[h]
\begin{center}
\includegraphics[width=140mm]{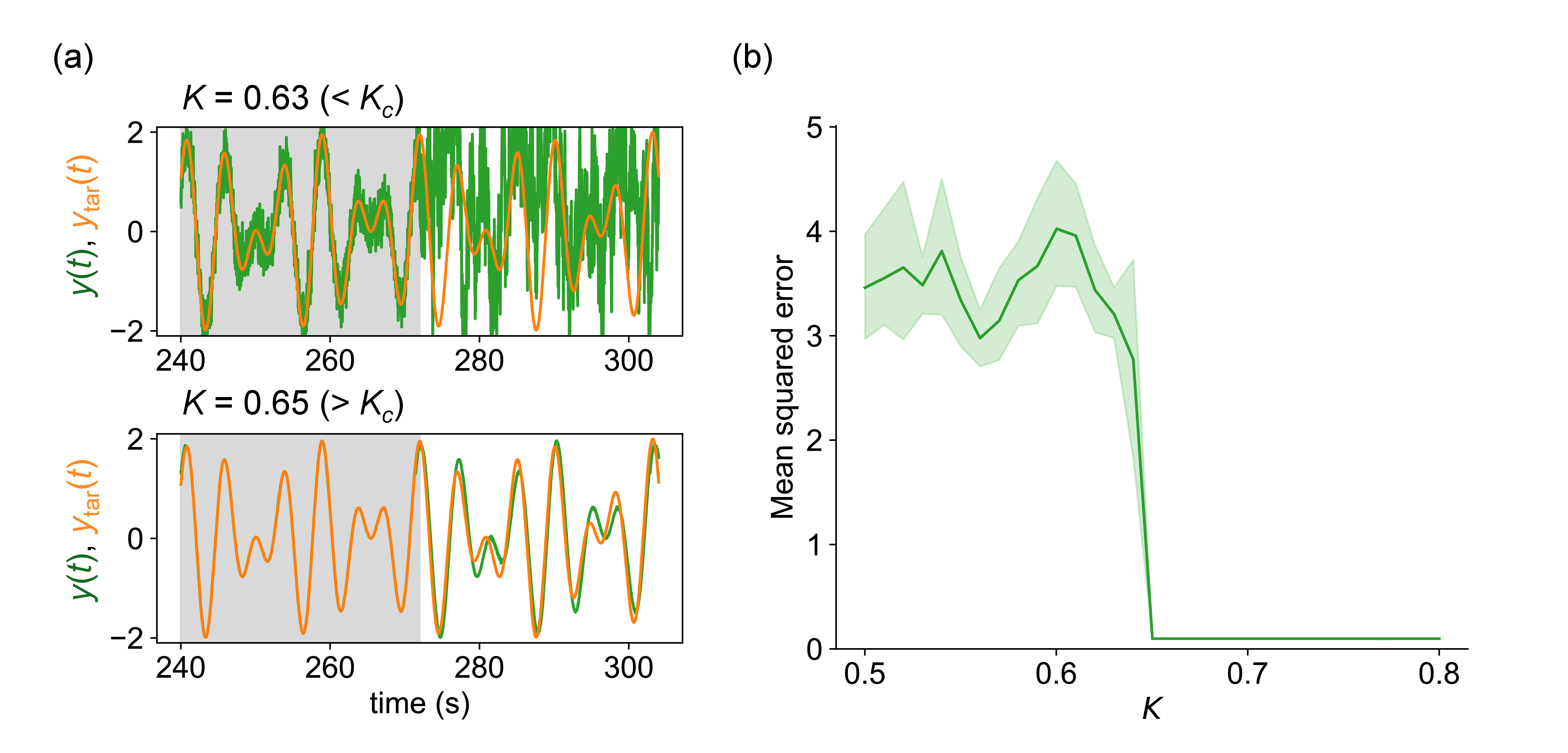}
\caption{
Prediction task using an almost periodic function.
(a) The output $y(t)$ (green line) and the target function $y_{\mathrm{tar}}(t)$ (orange line) with $\Omega=0.2$ for coupling strengths: 
$K=0.63$ ($<K_c$) and 
$K=0.65$ ($>K_c$), where $g(\omega)$ is uniform distribution.
Using Gaussian distribution yields the similar result.
(b) The mean squared error ($n = 10$).} 
\label{fig:5} 
\end{center}
\end{figure}

\begin{figure}[h]
\begin{center}
\includegraphics[width=140mm]{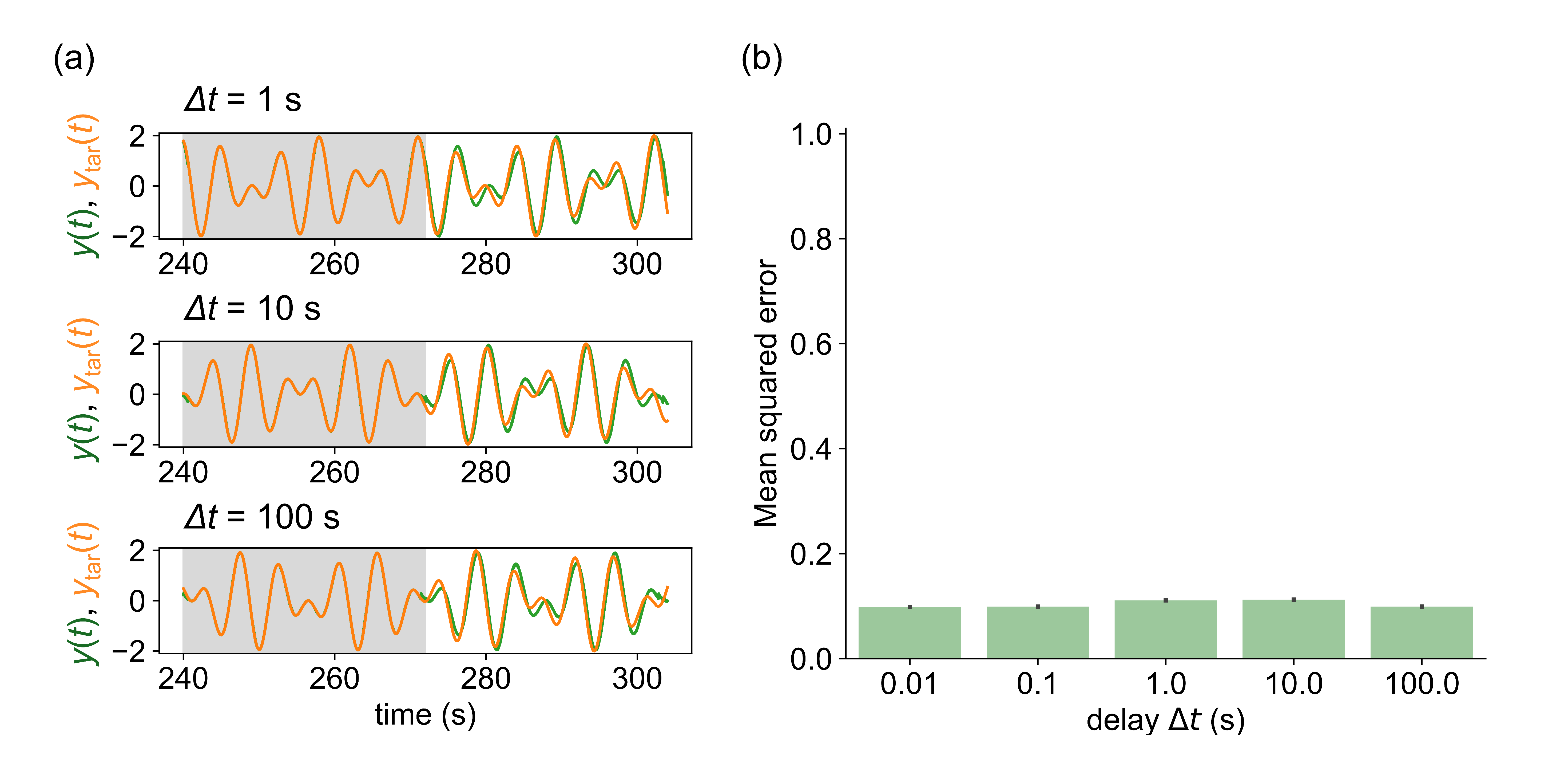}
\caption{
Effect of the delay $\Delta t$ on prediction performance. 
(a) The output $y(t)$ (green line) and the target function $y_{\mathrm{tar}}(t)$ (orange line) with $\Delta t = 1, 10$ and $100$ s. $\Omega=0.2$ and $K=0.65$ ($>K_c$). 
(b) The mean squared error ($n = 10$). The performance remained stable for a wide range of delays.}
\label{fig:6} 
\end{center}
\end{figure}

\begin{figure}[h]
\begin{center}
\includegraphics[width=140mm]{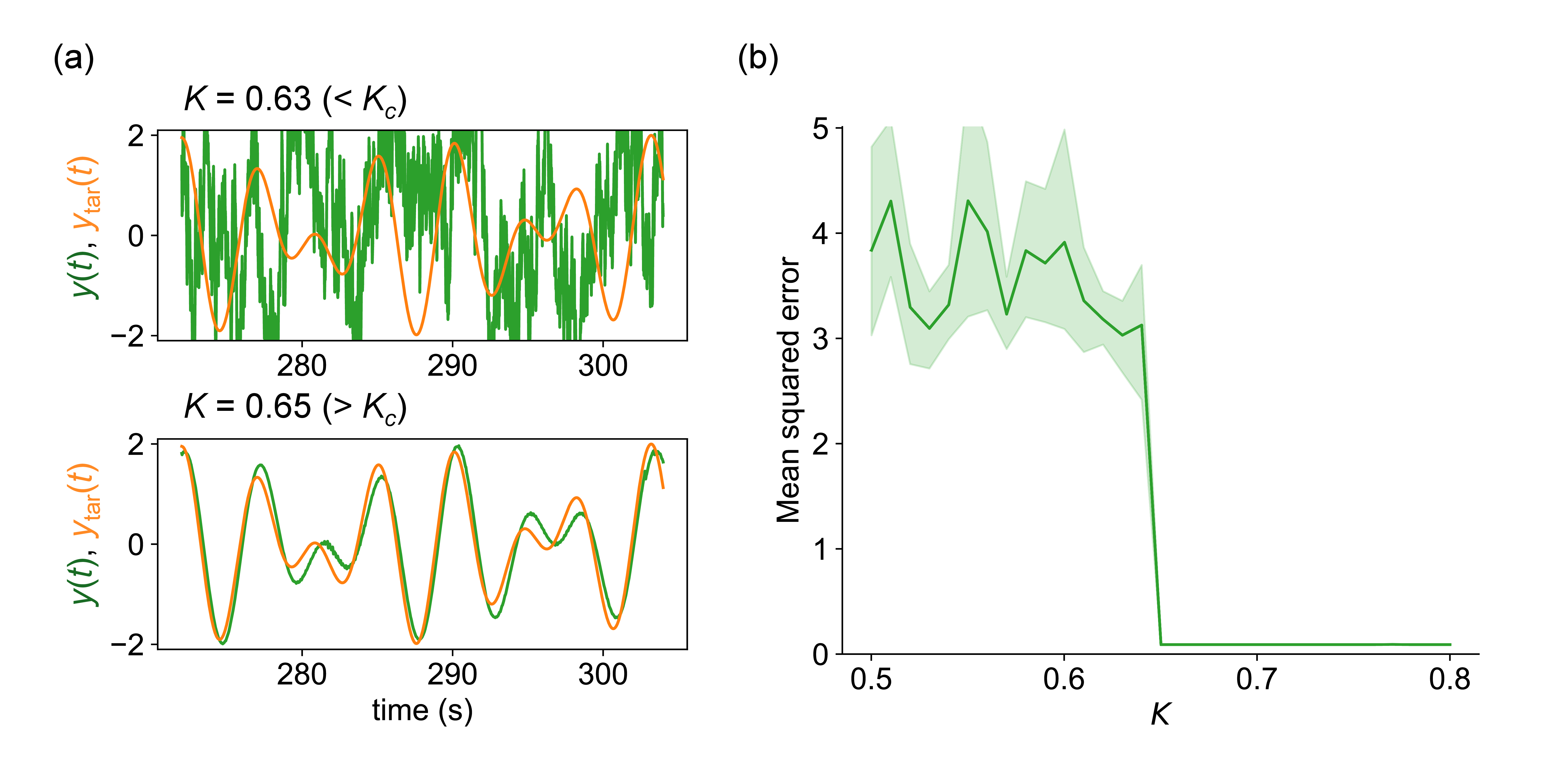}
\caption{
Autonomous run.
(a) The output $y(t)$ (green line) and the target function $y_{\mathrm{tar}}(t)$ (orange line) with $\Omega=0.2$ for coupling strengths: 
$K=0.63$ ($<K_c$) 
and $K=0.65$ ($>K_c$). 
(b) The mean squared error ($n = 10$).}
\label{fig:7} 
\end{center}
\end{figure}

\section*{Acknowledgment}
This work was supported by JST, CREST Grant Number JPMJCR2014, Japan (to HC and KT), JST Moonshot R\&D Program Grant Number 
JPMJMS2023, Japan (to TS).

\bibliographystyle{plain}
\bibliography{Chiba_Taniguchi_Sumi20250925}

\end{document}